\font \symb = msbm10 at 12truept
\newcommand\R{\hbox {\symb R}}
\def\Xint#1{\mathchoice
{\XXint\displaystyle\textstyle{#1}}{\XXint\textstyle\scriptstyle{#1}}
{\XXint\scriptstyle\scriptscriptstyle{#1}}{\XXint\scriptscriptstyle\scriptscriptstyle{#1}}\!\int}
\def\XXint#1#2#3{{\setbox0=\hbox{$#1{#2#3}{\int}$ }
\vcenter{\hbox{$#2#3$ }}\kern-.6\wd0}}
\def\abstr{\if@twocolumn
\section*{abstr}
\else \small
\quotation\fi
{\bf Abstract.}
}
\def\endabstr{\if@twocolumn\else\endquotation\fi}
\begin{document}
	\title{{\LARGE {On Harnack inequality  to the homogeneous nonlinear degenerate parabolic equations}}\\
		[30pt]}
	\author{Jasarat Gasimov$^{a}$, Farman Mamedov$^{b,c}$ 
		\\
		{\small {
			jasarat.gasimov@emu.edu.tr \quad	mfarmannn@gmail.com  
		}}\\
		[1cm]$^{a}$Department of Mathematics, Eastern Mediterranean University,\\
		(Mersin 10, 99628, T.R. North Cyprus, Turkey)
		\\
		$^{b}$Mathematics and Mechanics Institute  \\ 
		(\emph{\ B.Vahabzade str., 9, AZ 1141, Baku, Azerbaijan}
		)\\ \&    $^{c}$State Oil and Industry University \\
		(\emph {\  Azadlig ave. 34, Baku AZ1010, Azerbaijan}) 
	} 
	\maketitle
	\numberwithin{equation}{section}
	\newtheorem{teo}{\quad Theorem}\newtheorem{prop}[teo]{Proposition} %
	\newtheorem{defi}[teo]{Definition} \newtheorem{lem}[teo]{Lemma} %
	\newtheorem{cor}[teo]{Corollary} \newtheorem{rem}[teo]{Remark} %
	\newtheorem{ex}[teo]{Example} \newtheorem{cla}{Claim}
	\newtheorem{con}[teo]{Conjecture}
	\newtheorem{prob}[teo]{Problem}
	
	\pagestyle{myheadings} \baselineskip 13.5pt
	
	\bigskip \bigskip
	
	\thispagestyle{empty} \medskip \indent
	
	\begin{abstr} \small{In this paper, the Harnack inequality result is established for a new class of the homogeneous nonlinear degenerate parabolic equations
			$$
		\text{div} \, A(t,x,u,\nabla_x u)-\partial_t \,  \vert u\vert^{p-2}u=0 
			$$
			on a bounded domain $ D \subset \R^{n+1}. $ Let $A(t,x,\xi,\eta)$ be measurable function on $\mathbb{R}\times\mathbb{R}^n\times \mathbb{R}\times\mathbb{R}^n\to\mathbb{R}^n$ that satisfies the Caratheodory conditions
			for $ \, \text{arbitrary } \,  (t,x)\in D$ and $(\xi,\eta)\in\mathbb{R}^{1}\times\mathbb{R}^n.$ The following growth conditions are also satisfied:
			\begin{equation*}
			A(t,x,\xi,\eta)\eta\geq c_{1}\omega(t,x)\vert\eta\vert^{p}
			\end{equation*}
		\begin{equation*}
			\vert A(t,x,\xi,\eta)\vert\leq c_{2}\omega(t,x)\vert\eta\vert^{p-1},\quad p>1.
		\end{equation*}
			 The exclusive Muckenhoupt condition $ \omega^{\alpha}  \in  A_{1+{\alpha}/r} : $
			$$ 
			\left (\Xint -_{Q_T} \omega^{\alpha} dxdt \right )^{1/{\alpha}}\left (\Xint -_{Q_T} \sigma^r dxdt \right )^{(p-1)/r} \leq c_0
			$$  is assumed for a $ {\alpha} > (n+p)/p, $  an $ r>n(p-1)/p $ such that $ n(p-1)/pr+(n+p)/p{\alpha}<1; $ the cylinders  
			$$ \left \{ Q_T= K_R^{x_0} \times (t_0-T, t_0):  (t_0, x_0) \in D , \, T=CR^p\big/ \left  ( \Xint -_{Q_T}\omega^{\alpha} \, dxdt \right )^{1/{{\alpha}}} \right \},   $$  $  \sigma=\omega^{-\frac{p^{\prime}}{p}}  ,  \, K_r^{x}=\{ y\in \R^n: \, |y-x|<r \} .  $ The cylinders are of $T \to 0 $ as $R\to 0 .$ 
		}
	\end{abstr}
	
	\bigskip
	
	\bigskip
	
	\bigskip
	
	\textbf{Keywords:} Homogeneous nonlinear degenerate parabolic equation, regularity of solutions, Harnack's inequality
	
	\textbf{MS Subject classification:} {35K65
		35B65,  35D10, 35K10 }
	
	\vspace{2cm} \quad
	
	\bigskip
	
	\noindent
	
\tableofcontents
	
	\section{Introduction.}
	
	\label{Sec1}
	
	\bigskip
	
	\qquad
	In this paper, we study the Harnack inequality estimate for the weak solutions of the  homogeneous nonlinear degenerate parabolic equation 
	\begin{equation} \label{PE}
 \text{div}\,  A(t,x,u,\nabla_x u)-\partial_t \, \vert u\vert^{p-2}u=0 
	\end{equation}
	on a bounded open domain $ D \subset \R^{n+1}, \,  n\geq 1, $ and in case of $p=2,$ we quote to Mamedov \cite{FM} in the sense that, in fact, one condition \eqref{e4} in $p=2$ only he had used to characterize the Harnack inequality.   We suppose such approach would become fruitful
on handing the parabolic analogue of the well known
Fabes-Kenig-Serapioni’s result in elliptic equations\cite{FKS}. While during the 1980s in the series of works by Chiarenza and Serapion  \cite{ChiS,ChiSer2,ChS,ChS2}  two separate conditions were applied for $\omega $ to characterize the Harnack inequality. Those conditions looks like to be exact for case of factorised $\omega=\omega_1(x)  \omega_2(t).$ However, as it was shown in \cite {FM} such conditions become no exact for non-factorised $\omega=(\vert x \vert^2  + t^{2})^{1/2} .$ Precisely, the cited author provides an example $\omega$ which satisfies \eqref{e4} and violates  the conditions of Chiarenza-Serapioni.

	Because of nonlinear nature of the term $\partial_t \vert u\vert^{p-2}u$, we are unable to simply add a constant to a solution. To the best of our  knowledge , Trudinger was the first to investigate this type of equation in \cite{nst}, where he established a Harnack inequality for weak solutions that are nonnegative. The demonstration relied on Moser's renowned contribution \cite{Mo} and employed a parabolic adaptation of the John-Nirenberg lemma. Two decades later, Fabes and Garofalo presented a simplified proof of the parabolic John-Nirenberg lemma, documented in \cite{FKS1}.This issue has been extensively explored in the literature. In the context of homogeneous nonlinear equations, studies have been conducted by \cite{tjj,kkuusi,tkrlsu,nst,vvespri}, and for the nonhomogeneous cases, the equation was studies in \cite{DGV,MS}, while relevant results for elliptic equations have been obtained in \cite{FKS,AM1,AM2,DIF2,DIF,DIF3}.

	We should mention that the equation is homogeneous, which implies that we obtain the Harnack inequality without the need for "intrinsic".
	
$\odot$	Initially, we establish the Moser inequality through the utilization of weighted interpolation inequality and the iteration lemma to accomplish our primary objective.

$\odot$ Next, we demonstrate the Harnack  inequality based on the foundation provided by the  Bombieri lemma.
	
	$\bullet$Let $A(t,x,\xi,\eta):\mathbb{R}\times\mathbb{R}^n\times \mathbb{R}\times\mathbb{R}^n\to\mathbb{R}^n$ be Caratheodory function satisfying the following growth conditions:
	
	There are positive constants $c_{1},c_{2}$ such that 
	\begin{equation}\label{grw1}
		A(t,x,\xi,\eta)\eta\geq c_{1}\omega(t,x)\vert\eta\vert^{p}
	\end{equation}
	\begin{equation}\label{grw2}
		\vert A(t,x,\xi,\eta)\vert\leq c_{2}\omega(t,x)\vert\eta\vert^{p-1},\quad p>1.
	\end{equation}
for $ \, \text{arbitrary } \,  (t,x)\in D$ and $(\xi,\eta)\in\mathbb{R}^{1}\times\mathbb{R}^n.$ 

$\bullet$There exist a  \, $ c_0>0 $ a $ \alpha > (n+p)/p,  \, r > n(p-1)/p $ with $n(p-1)/pr+(n+p)/p\alpha <1$ such that the condition 
 
		\begin{equation}\label{e4}
			\omega^{\alpha} \in A_{1+{\alpha}/r}: \quad \left  (\Xint -_{Q_T}\omega^{\alpha} dxdt \right )^{1/{\alpha}}\left (\Xint -_{Q_T}\sigma^r dxdt \right )^{(p-1)/r}\leq c_0 
	\end{equation}  is fulfilled all over the cylinders 
	\begin{equation}\label{cyl}
		Q_T=K^{x}_R\times (t-T, t), \quad  T=CR^p\Big/ \left (\Xint -_{Q_T}\omega^{\alpha} \, dxdt \right )^{1/{\alpha}}; 
	\end{equation}and $\sigma=\omega^{-\frac{p^{\prime}}{p}} $with center $(t, x)$
	
	\smallskip
	
	The value of $ T=T(R) $ depending on $R$ can be determined as a solution of the equation  
	\begin{equation}\label{Tdef}
		\left (\iint\limits _{Q_T} \omega^{\alpha} \, dtdx\right )^{1/{\alpha}} =  \frac{ C R^{n/{\alpha}+p}}{T^{1/{\alpha}^\prime}} ,
	\end{equation}
 which is unique as center $(t, x)$ and $R$ be fixed; $C>0$ is a fixed constant.  This follows from the increasing of left hand side of \eqref{Tdef} as a function of $T$ and vanishing near origin, while the right hand side is decreasing and tends to infinite as $T \to 0 . $
	Also the $T=T(R)$ may be defined as   
	\begin{equation}\label{dT}
		T=\sup \left \{ F>0: \,   \left (\iint\limits_{Q_F} \omega^{\alpha} \, dtdx \right )^{1/{\alpha}} F^{1/{\alpha}^\prime}\leq \, C R^{n/{\alpha}+p}   \right \} .
	\end{equation}

In this paper, we demonstrate that local weak solutions, which are non-negative, to equation \eqref{PE}, adhere to the Harnack inequality within the corresponding cylinders, as defined in \eqref{cyl}. The size of these cylinders varies depending on the weight parameter $\omega$. 

We adopt standard notation. Let $p'$ denote the number such that $\frac{1}{p} + \frac{1}{p'} = 1$ if $1 < p < \infty$, and $p' = \infty$ if $p = 1$. We define the function $v: \mathbb{R}^n \rightarrow (0,\infty)$ to satisfy the $A_\infty$-Muckenhoupt condition if there exist constants $C, \delta > 0$ such that for any ball $K \subset \mathbb{R}^n$ and measurable subset $E \subset K$, the inequality 
\begin{align*}
	v(E) \leq C \left( \frac{\vert E \vert}{\vert K \vert} \right)^\delta v(K)
\end{align*}
holds.

Here and henceforth, for a positive function $v$, the notation $v(E)$ signifies $\int_{E}v\,dx$, and for a measurable set $E$, $\vert E\vert$ represents the Lebesgue measure of set $E$. For a measurable set $E$ and an integrable function $f$, we denote $\Xint -_{E}f\,dx=\frac{1}{\vert E\vert}\int_{E}f\,dx$. The notation $K(y,r)$ (or $K_{r}^{y}$) refers to the $n$-dimensional Euclidean ball of radius $r$ centered at $y$, defined as $K(y,r)=\{x\in \mathbb{R}^{n}:\vert x-y\vert<r\}$.

For the cylinder $Q_{T}=K_{R}^{x_{0}}\times (t_{0}-T,t_{0})\subset \mathbb{R}^{n+1}$, we denote $\Gamma(Q_{T})$ as its parabolic boundary, given by $\partial K_{R}^{x_{0}}\times(t_{0}-T,t_{0})\cup K_{R}^{x_{0}}\times\{t_{0}-T\}$. For a domain $D\subset \mathbb{R}^{n}$ (or $\Omega\subset \mathbb{R}^{n+1}$), we define $L^{p}(D)$ (or $L^{p}(\Omega)$) as the space of measurable functions on $D$ (or $\Omega$) with finite norms, expressed as $(\int_{D}\vert f\vert^{p}\,dx)^{{1}/{p}}$ (or $(\int_{\Omega}\vert f\vert^{p}\,dtdx)^{{1}/{p}})$. Furthermore, we denote $W^{1}_{p,\omega}(D)$ as the weighted Sobolev space on $D$, equipped with the finite norm $\|f\|_{L^{p}(D)} + \|\omega^{1/p} |\nabla f|\|_{L^{p}(D)}$.

 We would like to note that we will pursue the methods and ideas which was indicaded in \cite{FM}.
	\begin{defi}
		We say
		\begin{align*}
			u\in L^{\infty}\bigg(t_{0}-T,t_{0};L^{p}\bigg(K_{R}^{x_{0}}\bigg)\bigg)\cap L^{p}\bigg(t_{0}-T,t_{0};W^{1}_{p,\omega}\bigg(K_{R}^{x_{0}}\bigg)\bigg)
		\end{align*}
	is a sub-solution (super-solution) of (\ref{PE}) on $Q_{T}$ if $
	\forall v>0$ from the space $Lip(Q_{T})$ vanishing on $\Gamma(Q_{T})$ satisfies:
	\begin{align}\label{dfn}
	-\int_{K_R^{x_0}}u^{p-1}v\Big\vert_{t=t_0 -T} ^{t=t_0}-\iint_{Q_T}A(t,x,u,\nabla_{x}u)\nabla_{x}vdtdx+\iint_{Q_T}u^{p-1}\partial_{t}vdtdx\geq(\leq)0.
	\end{align}
	\end{defi}
We define a weak solution of equation (\ref{PE}) as a function $u$ that satisfies both the sub-solution and super-solution conditions simultaneously.

Throughout the paper, the symbol C will represent a constant that depends on various parameters, such as the dimension n, exponents p and r, and constants $c_{1}$, $c_{2}$, and $c_{0}$. It is important to note that this constant may vary at different instances within the paper, and its specific value is determined accordingly.

	\bigskip
	
	\noindent
	
	\section{Preliminary}
	
	\label{Sec2}
	
	\bigskip

	\qquad
Interpolation inequalities, particularly those formulated on cylindrical domains in $\mathbb{R}^{n+1}$, constitute fundamental tools in the analysis of parabolic equations. These inequalities have proven indispensable in exploring the regularity characteristics of degenerate parabolic equations.
	
	Let $D$ be an open domain in $\mathbb{R}^n$ (which may or may not be bounded). Consider three positive measurable functions, $v$, $\omega_1$, and $\omega_2$. These functions satisfy the following properties:
	
	$\bullet$The functions $v$ and $\omega_2$ belong to the Muckenhoupt $A_\infty$ class of functions defined on $\mathbb{R}^n$.
	
	$\bullet$The function $\sigma_1=\omega_1^{1-s'}$ is locally integrable, meaning $\sigma_1 \in L^{1, \text{loc}}$, where $s'$ denotes the conjugate exponent of the Sobolev space $L^s$.
	
	Based on these assumptions, we propose an inequality involving the weighted interpolation of these functions.
	
 Denote the balls system of $\R^n$  $$\mathcal{F} _D=\{K=K(x, r):  \, x\in D, \, 0<r<\text{diam} \, D \} $$ in order to propose  
	\begin{teo}\cite{FM} \label{t1}
		Let $m>0, \, s \geq 1, \, q\geq 1+\frac{m}{s^\prime}. $ There exists a constant $C_0$ depending only on $n, s, q, m $ and $A_\infty $-constants of the functions 
		$ v, \omega_2 $ such that if  
		\begin{equation}\label{ee3}
			\vert K\vert^{1/n-1}v(K)\sigma_1(K)^{1/s^\prime} \leq A \, \omega_2(K)^{\frac{q-1}{m}}, \quad  \forall K\in \mathcal{F}_D,
		\end{equation}
		then
		\begin{equation}\label{int}
			\big\Vert f \big \Vert_{q, v}\, \leq \, C_0 \, A^{1/q}\,  \, \, \big\Vert f \big \Vert_{m, \omega_2}^{\frac{1}{q^\prime}} \, \big\Vert  \nabla_{x}f \big \Vert_{s, \omega_1}^{\frac{1}{q}}, \quad f\in Lip_0(D).
		\end{equation}
	\end{teo}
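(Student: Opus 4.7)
The plan is to derive (\ref{int}) by combining a pointwise Riesz-potential representation of $f$ in terms of $\nabla f$ with a Hedberg-type splitting, using the testing condition (\ref{ee3}) to exchange the $\sigma_1$-factor for a product of $v(K)$ and a power of $\omega_2(K)$, and finally invoking the $A_\infty$ hypotheses on $v$ and $\omega_2$ to move from pointwise bounds to the global $L^q(v)$ norm.

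First, for $f\in \mathrm{Lip}_0(D)$ I would start from the classical pointwise estimate $|f(x)|\le C_n\int_D|x-y|^{1-n}|\nabla f(y)|\,dy$ and split the integral at a radius $\rho=\rho(x)$ to be chosen later. The near piece over $K(x,\rho)$ is estimated, via H\"older with exponents $s,s'$ applied after the dyadic subdivision of $K(x,\rho)$, by
\begin{equation*}
C\,\rho\,|K(x,\rho)|^{-1}\sigma_1(K(x,\rho))^{1/s'}\!\left(\int_{K(x,\rho)}|\nabla f|^{s}\omega_1\right)^{1/s},
\end{equation*}
while the far piece is expanded dyadically over the annuli $K(x,2^{k+1}\rho)\setminus K(x,2^{k}\rho)$ and bounded, through H\"older with exponents $m,m'$, by a weighted average of $|f|^m\omega_2$. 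Rewriting the geometric factor $|K|^{1/n-1}\sigma_1(K)^{1/s'}$ by (\ref{ee3}) as $A\,\omega_2(K)^{(q-1)/m}/v(K)$ and then choosing $\rho(x)$ to equalize the two pieces yields a Hedberg-type pointwise inequality in which $|f(x)|$ is controlled by a gradient-maximal function and a function-maximal function, adapted to $\omega_1$ and $\omega_2$ respectively, with overall factor a power of $A/v(x)$.

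Raising this inequality to the $q$-th power, multiplying by $v(x)$, integrating over $D$, and applying a final H\"older step whose exponents close precisely because $q\ge 1+m/s'$, together with the weighted $L^s(\omega_1)$ and $L^m(\omega_2)$ boundedness of the relevant maximal operators that follows from the $A_\infty$ assumptions, produces (\ref{int}) with the constant $C_0A^{1/q}$.

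The main obstacle is the pointwise choice of $\rho(x)$: since (\ref{ee3}) is only a scalar testing bound on each ball, rather than a pointwise weight relation, the balancing must be consistent across all scales simultaneously, and this is exactly where the exponent condition $q\ge 1+m/s'$ enters, forcing the algebra of H\"older exponents to close. A secondary subtlety is ensuring that the constant $A$ appears with the sharp exponent $1/q$; this is achieved by distributing the bound from (\ref{ee3}) between the gradient factor and the function factor in the exact ratio dictated by the final H\"older step, rather than absorbing it entirely into one piece.
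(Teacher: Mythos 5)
The paper states Theorem~\ref{t1} without proof, citing it from \cite{FM}, so there is no in\mbox{-}paper argument to compare against; I can only assess the proposal on its own terms, and there it has a genuine gap in the far\mbox{-}field step. You start from the representation $|f(x)|\le C_n\int_D|x-y|^{1-n}|\nabla f(y)|\,dy$, whose right\mbox{-}hand side involves \emph{only} $\nabla f$. After splitting at radius $\rho$, both the near integral and the far integral are Riesz\mbox{-}kernel integrals of $|\nabla f|$, and H\"older alone cannot convert the far piece into ``a weighted average of $|f|^m\omega_2$'': the integrand does not contain $f$. The missing idea is a conversion of the far part into something involving $f$ itself---for instance, integrate $\nabla f(y)\cdot\frac{x-y}{|x-y|^n}$ by parts over $\{|x-y|>\rho\}$ and use that $\mathrm{div}_y\,\frac{x-y}{|x-y|^n}=0$ away from $x$, so that only a spherical (or solid annular) average of $f$ survives; equivalently, start from a Poincar\'e\mbox{-}type representation $f(x)=f_{K(x,\rho)}+(\text{truncated Riesz potential of }\nabla f)$ and estimate the mean by a maximal function of $f$. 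Without that conversion, balancing in $\rho$ only yields a Sobolev\mbox{-}type bound $\Vert f\Vert_{q,v}\lesssim\Vert\nabla f\Vert_{s,\omega_1}$ with no $\Vert f\Vert_{m,\omega_2}$ factor, and the threshold $q\ge 1+m/s'$ never enters.

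A secondary concern: even after fixing the far part, your finale (``raise to the $q$-th power, multiply by $v$, integrate, apply one more H\"older'') is too compressed to check. The hypothesis \eqref{ee3} is a ball\mbox{-}by\mbox{-}ball testing inequality, not a pointwise relation between the weights, so one cannot simply slot $A\,\omega_2(K)^{(q-1)/m}/v(K)$ into the pointwise Hedberg estimate and then integrate against $v(x)\,dx$. The usual route is to phrase the near and far bounds through $\omega_1$- and $\omega_2$-adapted maximal operators, and to use the $A_\infty$ hypotheses on $v$ and $\omega_2$ (doubling, reverse H\"older) to prove the required weighted maximal\mbox{-}function bounds with \eqref{ee3} absorbed into them. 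That step is the technical heart of such interpolation theorems and needs to be carried out, not merely invoked.
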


		\begin{cor}\cite{FM} \label{cc40}
			Let $ q\in \left [ 1, (n+p)/n \right ]. $ Then for a function $f\in Lip_0(D)$ the 
			the inequality
			\begin{equation}\label{int4}
				\big\Vert f \big \Vert_{q}\, \leq \, C_0 \, A^{1/q} \big\Vert f \big \Vert_{p}^{\frac{1}{q^\prime}}\, \, 
				\,  \big\Vert  \nabla_{x} f \big \Vert_{1}^{\frac{1}{q}}
			\end{equation} holds, 
			where\,$  A=\vert D \vert ^{\frac{1}{n}-\frac{q-1}{p}}, \, \, 
			C_0$ depends only on $n, q. $
		\end{cor}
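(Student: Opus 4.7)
My plan is to deduce Corollary \ref{cc40} as a direct specialisation of Theorem \ref{t1} to the unweighted setting. Concretely, I take $v\equiv 1$, $\omega_1\equiv 1$, $\omega_2\equiv 1$, together with the parameters $m=p$ and $s=1$. The unit weight lies trivially in $A_\infty$ with universal constants, so the standing hypotheses of Theorem \ref{t1} on $v$ and $\omega_2$ are automatic.

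With $s=1$ one has $s'=\infty$, hence $m/s'=0$, and the admissibility condition $q\geq 1+m/s'$ reduces to $q\geq 1$, which is assumed; accordingly the factor $\sigma_1(K)^{1/s'}$ is interpreted as $1$. Under this substitution the three weighted norms in \eqref{int} collapse respectively to $\|f\|_q$, $\|f\|_p$ and $\|\nabla_x f\|_1$, exactly matching the norms appearing in \eqref{int4}. What remains is to verify the integral condition \eqref{ee3}, which here becomes
$$
|K|^{1/n}\leq A\,|K|^{(q-1)/p}\qquad\text{for every }K\in\mathcal{F}_D,
$$
equivalently $A\geq |K|^{1/n-(q-1)/p}$. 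Since $1\leq q\leq (n+p)/n$, the exponent $1/n-(q-1)/p$ is non-negative, so the right-hand side is maximised at balls of largest radius; taking $A$ proportional to $|D|^{1/n-(q-1)/p}$ then dominates every admissible $K$ up to a purely dimensional factor that can be absorbed into $C_0$.

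The step I expect to require most care is this last one. Because the balls of $\mathcal{F}_D$ need not be contained in $D$, the comparison of $|K|$ with $|D|$ is not entirely immediate; it rests on the fact that $f\in Lip_0(D)$ has compact support, so that only balls whose centre lies essentially within $\mathrm{supp}\,f$ contribute, and for such balls $|K|$ is controlled by $|D|$ up to a constant depending only on $n$. Once this bookkeeping is carried out, \eqref{int4} follows at once from \eqref{int}; the resulting constant depends only on $n$ and $q$, since the $A_\infty$-data of the unit weight are universal.
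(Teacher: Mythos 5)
Your overall plan — obtain Corollary \ref{cc40} by specialising Theorem \ref{t1} with $v\equiv\omega_1\equiv\omega_2\equiv1$, $m=p$, $s=1$ — is the natural route and the choices of parameters do make the three norms in \eqref{int} collapse to the three unweighted norms in \eqref{int4}. The point where the argument breaks is exactly the one you flagged, but the fix you describe does not work.

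The collection $\mathcal{F}_D$ is explicitly $\{K(x,r): x\in D,\ 0<r<\mathrm{diam}\,D\}$; its members have volume up to a dimensional constant times $(\mathrm{diam}\,D)^n$. Restricting to balls centred in $\mathrm{supp}\,f$ does not help, because the radius is still allowed to range up to $\mathrm{diam}\,D$. For an irregular domain (say a thin tube in $\mathbb{R}^2$ of length $1$ and width $\varepsilon$), $(\mathrm{diam}\,D)^n/|D|\to\infty$ as $\varepsilon\to 0$, so $\sup_{K\in\mathcal{F}_D}|K|$ is \emph{not} controlled by $|D|$ up to a dimensional constant. Since $1/n-(q-1)/p\ge 0$, your verification of \eqref{ee3} thus forces $A\asymp(\mathrm{diam}\,D)^{\,1-n(q-1)/p}$, not $|D|^{\,1/n-(q-1)/p}$, and the former can exceed the latter by an arbitrarily large factor. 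So the direct specialisation proves a weaker inequality than the one stated, except in the case where $D$ is itself a ball (or more generally a domain with $|D|\asymp(\mathrm{diam}\,D)^n$), in which case the two quantities coincide up to a dimensional constant — and that is the only case actually used in this paper.

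For a proof of the corollary as stated for general $D$, do not go through Theorem \ref{t1} at all; interpolate in $q$ between the two endpoints. At $q=(n+p)/n$ one has $A=1$ and \eqref{int4} is the unweighted Gagliardo--Nirenberg inequality $\|f\|_{(n+p)/n}\le C_n\|f\|_p^{p/(n+p)}\|\nabla f\|_1^{n/(n+p)}$; at $q=1$ one has $A=|D|^{1/n}$ and \eqref{int4} reads $\|f\|_1\le C_n|D|^{1/n}\|\nabla f\|_1$, which follows from Sobolev's inequality $\|f\|_{n/(n-1)}\le C_n\|\nabla f\|_1$ together with H\"older on $\mathrm{supp}\,f\subset D$. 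For $1\le q\le(n+p)/n$ write $1/q=(1-\theta)+\theta n/(n+p)$, apply $\|f\|_q\le\|f\|_1^{1-\theta}\|f\|_{(n+p)/n}^\theta$ and the two endpoint estimates; a short computation gives $\theta p/(n+p)=1/q'$ and $(1-\theta)/n=\bigl(1/n-(q-1)/p\bigr)/q$, which is precisely \eqref{int4}. This argument is valid for arbitrary open $D$ and produces the constant $|D|^{1/n-(q-1)/p}$ claimed in the corollary.
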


		\begin{lem} \cite{FM}  \label{lem1}Let $q\in \left [ 1, (n+p)/n \right ] $ and let $D\subset \R^n$ be an open domain $ Q_T=D\times (t_0-T, t_0) $ be the cylinder with $T$ from \eqref{Tdef},  The positive functions $\omega, \sigma=\omega^{-\frac{p^{\prime}}{p}} \in L^{1, loc}(\R^{n+1}).$ Then for a function $ u\in Lip (Q_T) $ vanishing on $\Gamma (Q_T) $ the inequality 
			\begin{equation}\label{equ1}
				\left  \Vert  u  \right \Vert _{L^{q}(Q_T)}^{q}\leq C \left ( \sup_{t\in (t_0-T, t_0)}\,  \left \Vert u(t, \cdot ) \right \Vert _{L^{p}\left (D \right )}^{q-1} \right ) \, \, \left \Vert  \omega ^{1/p} \vert \partial _x u \vert \right \Vert _{L^{p}(Q_T)}
				\, \left \Vert  \sigma ^{1/p^{\prime}} \chi_{\text {Spt} \,  u}\right \Vert _{L^{{p}^{\prime}}(Q_T)}
			\end{equation}
			holds, where $C=C_0 \vert D \vert^{\frac{1}{n}-\frac{q-1}{p}} $ and $C_0 $ depends on $n, q; $ the $\text{Spt} \, u $ denotes a support of $u.$
		\end{lem}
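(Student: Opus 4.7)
The plan is to derive the parabolic inequality by applying Corollary \ref{cc40} slice-by-slice in time and then controlling the resulting spatial gradient term via Hölder's inequality with the dual weight $\sigma = \omega^{-p'/p}$.

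First I would fix $t\in (t_0-T,t_0)$ and view $u(t,\cdot)$ as an element of $Lip_0(D)$, since $u$ vanishes on the lateral part of $\Gamma(Q_T)$. Applying Corollary \ref{cc40} to $u(t,\cdot)$ and raising both sides to the $q$-th power gives
\begin{equation*}
\int_D |u(t,x)|^q\,dx \, \leq \, C_0^q\, |D|^{\frac{1}{n}-\frac{q-1}{p}} \, \|u(t,\cdot)\|_{L^p(D)}^{q-1} \, \int_D |\nabla_x u(t,x)|\,dx,
\end{equation*}
using that $q/q' = q-1$. Next I would integrate this pointwise-in-$t$ inequality over $(t_0-T,t_0)$ and bound the factor $\|u(t,\cdot)\|_{L^p(D)}^{q-1}$ by its supremum, which can be pulled outside the time integral. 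This yields
\begin{equation*}
\iint_{Q_T}|u|^q\,dxdt \, \leq \, C_0^q\, |D|^{\frac{1}{n}-\frac{q-1}{p}}\, \Big(\sup_{t\in(t_0-T,t_0)} \|u(t,\cdot)\|_{L^p(D)}^{q-1}\Big) \iint_{Q_T}|\nabla_x u|\,dxdt.
\end{equation*}

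At this point it remains to insert the weights. Since $\sigma^{1/p'} = \omega^{-1/p}$, I would write on the support of $u$
\begin{equation*}
|\nabla_x u| \, = \, \omega^{1/p}|\nabla_x u|\cdot \sigma^{1/p'}\chi_{\text{Spt}\,u},
\end{equation*}
and apply Hölder's inequality with exponents $p$ and $p'$ on the cylinder $Q_T$, obtaining
\begin{equation*}
\iint_{Q_T}|\nabla_x u|\,dxdt \, \leq \, \|\omega^{1/p}|\partial_x u|\|_{L^p(Q_T)}\, \|\sigma^{1/p'}\chi_{\text{Spt}\,u}\|_{L^{p'}(Q_T)}.
\end{equation*}
Combining this with the previous display and redefining the constant $C_0$ to absorb the power $q$ produces exactly \eqref{equ1}.

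The whole argument is essentially a slicewise reduction to the stationary Gagliardo--Nirenberg type inequality of Corollary \ref{cc40} followed by a weighted Hölder estimate; there is no real obstacle. The only point that required some care is the admissibility of Corollary \ref{cc40} at each time slice, which is guaranteed because $u \in Lip(Q_T)$ vanishes on $\partial K_R^{x_0}\times(t_0-T,t_0)$, so $u(t,\cdot)\in Lip_0(D)$ for almost every $t$. The appearance of the $\sup_t \|u(t,\cdot)\|_{L^p(D)}^{q-1}$ on the right is natural because the time-integration of the pointwise inequality cannot mix the spatial $L^p$ norm with the gradient term without losing sharpness; extracting it as a supremum is the standard parabolic device and is what will later pair with the energy estimate in the Moser iteration.
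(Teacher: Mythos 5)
Your proof is correct and follows exactly the route the paper intends: Corollary \ref{cc40} is stated immediately before Lemma \ref{lem1} precisely so that it can be applied slice-by-slice in $t$, raised to the $q$-th power (using $q/q'=q-1$), integrated in time with the spatial $L^p$ norm pulled out as a supremum, and finished with a weighted H\"older estimate against $\sigma^{1/p'}=\omega^{-1/p}$. The one point worth making explicit, which you do, is that for Lipschitz $u$ the gradient vanishes a.e.\ off $\text{Spt}\,u$, so inserting $\chi_{\text{Spt}\,u}$ before applying H\"older is legitimate.
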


		\bigskip
		
	Lemma \ref{lem1} establishes the following important result, which is crucial in our proof of the Moser inequality presented in Section \ref{s2}:
		
		\begin{cor} \cite{FM} \label{cc9}
			For a function $u \in Lip_0(D)$ 
			the inequality
			\begin{align}\label{equ2}
				\left  \Vert  u  \right \Vert _{L^{(n+p)/n}(Q_T)} 
				\leq C \left ( \sup_{t\in (t_0-T, t_0)}\,  \left \Vert u(t, \cdot ) \right \Vert _{L^{p}\left (D \right )}^{p/(n+p)} \right )\nonumber\\
    \times  \left \Vert  \omega ^{1/p}  \partial _x u  \right \Vert _{L^{p}(Q_T)}^{n/(n+p)}
				\, \,  \left \Vert  \sigma ^{1/p^{\prime}} \chi_{\text {Spt} \,  u}\right \Vert _{L^{{p}^{\prime}}(Q_T)}^{n/(n+p)}
			\end{align} holds, 
			where $C_0$ depends only on $n. $
		\end{cor}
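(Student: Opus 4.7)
The plan is to specialize Lemma \ref{lem1} to the endpoint exponent $q=(n+p)/n$, which is exactly the right endpoint of the admissible range $[1,(n+p)/n]$. First I would verify that this choice of $q$ trivializes the geometric factor in Lemma \ref{lem1}: there the constant is $C=C_0|D|^{1/n-(q-1)/p}$, and with $q=(n+p)/n$ we have $q-1=p/n$, so $1/n-(q-1)/p=1/n-1/n=0$. Thus $|D|^0=1$ and the constant reduces to $C_0$, depending only on $n$, which matches what the corollary claims.

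Next I would invoke Lemma \ref{lem1} directly with $q=(n+p)/n$ to obtain
$$\|u\|_{L^q(Q_T)}^{q}\leq C_0\Bigl(\sup_{t\in(t_0-T,t_0)}\|u(t,\cdot)\|_{L^p(D)}^{q-1}\Bigr)\,\|\omega^{1/p}|\partial_x u|\|_{L^p(Q_T)}\,\|\sigma^{1/p'}\chi_{\text{Spt}\,u}\|_{L^{p'}(Q_T)}.$$
Raising both sides to the power $1/q=n/(n+p)$ produces an inequality of exactly the form \eqref{equ2}. Indeed, the exponent on the supremum factor becomes $(q-1)/q=(p/n)/((n+p)/n)=p/(n+p)$, while the exponents on the gradient norm and the $\sigma$-factor each become $1/q=n/(n+p)$, matching the claimed expression.

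The last thing to address is that Lemma \ref{lem1} is stated for $u\in\mathrm{Lip}(Q_T)$ vanishing on $\Gamma(Q_T)$, whereas the corollary writes $u\in\mathrm{Lip}_0(D)$; I would interpret this (as the surrounding text indicates) as the same hypothesis, extending a spatially compactly supported Lipschitz function trivially in time, so that it automatically vanishes on the lateral and bottom parabolic boundary. No genuine obstacle is expected: the corollary is purely a substitution of the endpoint exponent followed by taking the $q$-th root, with the cancellation $1/n-(q-1)/p=0$ being the only point that deserves explicit mention.
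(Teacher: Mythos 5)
Your proposal is correct and is exactly the intended derivation: Corollary \ref{cc9} is the specialization of Lemma \ref{lem1} at the endpoint $q=(n+p)/n$, where the exponent $1/n-(q-1)/p$ vanishes so the constant $C_0|D|^{1/n-(q-1)/p}$ collapses to $C_0$, and raising the resulting inequality to the power $1/q=n/(n+p)$ yields \eqref{equ2} with the stated exponents $p/(n+p)$ and $n/(n+p)$. The paper imports this corollary from \cite{FM} without writing out the argument, but your substitution-and-root-taking is precisely what the statement relies on.
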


		\bigskip
		
	The main focus of our paper is to provide a proof of Moser's inequality for positive sub-solutions of equation \eqref{PE} on specific cylindrical domains. This inequality will be instrumental in establishing the Harnack inequality within the cylinders $Q_T^{(t, x)}$, defined as $K_R^x \times (t-T, t)$. It is important to note that the value of $T$ may differ from $R^p$ and varies from point to point, depending on the degeneracy characteristics of the weight function $\omega(t, x).$

		We will utilize the following widely recognized iteration lemma.
	
	\bigskip
		\begin{lem}\label{itlm} Let $f(t)$ be a bounded function in $[\tau_0, \tau_1]$ with $ \tau_0\geq 0. $ Let $\alpha\in [0, 1) $
			and $A>0$ is a positive constant. If for all $ s, t $ such that $ \tau_0\leq s<t \leq \tau_1, $
			$$
			f(s)\leq \alpha f(t)+\frac{A}{(t-s)^\beta}
			$$
			then
			$$
			f(s) \leq c(\alpha, \beta) \frac{A}{(t-s)^\beta}
			$$
			for positive constant $c(\alpha, \beta )>0 .$

		\end{lem}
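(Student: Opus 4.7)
The plan is to iterate the hypothesis along a geometrically chosen sequence of points that converges to $t$ from below, so that the multiplicative constant $\alpha$ is turned into a convergent geometric series while the $A/(t-s)^\beta$ term is controlled.

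First I would fix $s,t$ with $\tau_0 \le s < t \le \tau_1$ and pick an auxiliary parameter $\tau \in (0,1)$ subject to the requirement $\alpha \, \tau^{-\beta} < 1$; this is possible since $\alpha \in [0,1)$ and is equivalent to $\tau > \alpha^{1/\beta}$. Then I would set $t_0 = s$ and define the sequence $t_{i+1} - t_i = (1-\tau)\tau^i (t-s)$, so that by a telescoping sum $t_i = s + (1 - \tau^i)(t-s)$; in particular $t - t_i = \tau^i(t-s)$, the sequence is strictly increasing, lies in $[s,t]$, and $t_i \to t$ as $i\to\infty$. The sequence therefore stays in $[\tau_0,\tau_1]$, which makes the hypothesis applicable at every consecutive pair.

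Next I would apply the hypothesis to each pair $(t_i, t_{i+1})$ to obtain
$$
f(t_i) \le \alpha\, f(t_{i+1}) + \frac{A}{\bigl((1-\tau)\tau^i(t-s)\bigr)^{\beta}}.
$$
Iterating this inequality $k$ times starting from $i=0$ and collecting the remainder and the geometric contribution yields
$$
f(s) \le \alpha^{k} f(t_k) + \frac{A}{(1-\tau)^{\beta}(t-s)^{\beta}} \sum_{i=0}^{k-1} \Bigl(\tfrac{\alpha}{\tau^{\beta}}\Bigr)^{i}.
$$
Since $f$ is bounded on $[\tau_0,\tau_1]$ and $\alpha<1$, the remainder $\alpha^{k} f(t_k)$ tends to $0$ as $k\to\infty$; since $\alpha/\tau^{\beta}<1$, the geometric series converges to $(1-\alpha\tau^{-\beta})^{-1}$. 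Passing to the limit produces the desired bound
$$
f(s) \le \frac{1}{(1-\tau)^{\beta}\bigl(1-\alpha\tau^{-\beta}\bigr)}\cdot\frac{A}{(t-s)^{\beta}},
$$
so one may set $c(\alpha,\beta) = \inf_{\tau \in (\alpha^{1/\beta},1)} \bigl[(1-\tau)^{\beta}(1-\alpha\tau^{-\beta})\bigr]^{-1}$.

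There is no real obstacle here; this is the classical Giaquinta--Giusti hole-filling iteration. The only delicate point is the simultaneous choice of $\tau$ to enforce both $\tau<1$ (so that the spacing $(1-\tau)\tau^{i}(t-s)$ is positive and the series denominators are controlled) and $\tau^{\beta}>\alpha$ (so that the geometric series converges); the fact that $\alpha<1$ makes both conditions compatible. Boundedness of $f$ is used only to kill the remainder term in the limit, and it is essential because without it the iteration would produce a meaningless bound.
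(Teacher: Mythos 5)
Your proof is correct and is the classical Giaquinta--Giusti hole-filling iteration; the paper itself gives no argument here and merely cites \cite{FM}, where the same standard iteration is used. The geometric choice $t_{i+1}-t_i=(1-\tau)\tau^i(t-s)$ with $\tau\in(\alpha^{1/\beta},1)$, the summation of the resulting geometric series with ratio $\alpha\tau^{-\beta}<1$, and the use of boundedness of $f$ to discard the remainder $\alpha^k f(t_k)$ are exactly the intended steps. The only omission worth flagging is that you implicitly assume $\beta>0$ (so that $\alpha^{1/\beta}$ is well defined and the interval $(\alpha^{1/\beta},1)$ is nonempty); the lemma does not state this explicitly, but it is clearly taken for granted in the paper's applications, and the case $\beta=0$ is trivial anyway.
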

		
	\begin{proof}
		See, e.g. in \cite{FM}
		\end{proof}
		\bigskip
		
	To establish the Harnack inequality, we will require the following lemma as a crucial component of the proof.
		
		\begin{lem} \label{Bombieri} Let $u: Q_1 \to \R^+ $ be a positive measurable function and the one parameter family of cylinders $\{Q_s\}_{s\in [1/2, 1]}$ with $Q_s \subset Q_r$ and $1/2\leq s<r \leq 1$ be such that
			\begin{itemize}
				\item there exist positive constants $C_1, \theta $ such that for any $0<\delta<1$ it holds the inequality
				\begin{equation}\label{B1}
					\sup_{Q_s} u^\delta \leq \frac{C_1}{(r-s)^ \theta} \cdot \frac{1}{w ( Q_1)} \int_{Q_r} u^\delta  \,  w \, dt dx; 
				\end{equation}
				\item there exists a positive constant $C_2$ such that for any $k>0$ it holds
				\begin{equation}\label{B2}
					w \left ( \left \{   x\in Q_1: \, \, \ln u >k \right \} \right )<\frac{C_2}{k} w ( Q_1) .
				\end{equation}
				Then there exists $C_\theta$ such that 
				\begin{equation}\label{Bom}
					\sup\limits_{Q_s} \, u \leq \exp \left (C_\theta\frac{32 C_2 C_1^4}{(r-s)^{4\theta}} \right ), \quad 1/2\leq s<r \leq 1.
				\end{equation}
				
			\end{itemize}
			
		\end{lem}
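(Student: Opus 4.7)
The plan is to combine the reverse-Hölder estimate \eqref{B1} with the log-distributional bound \eqref{B2} via a classical level-set truncation, then iterate on a dyadic chain of intermediate radii. Writing $\Phi(t) := \sup_{Q_t} u$ for $t \in [1/2, 1]$ and splitting $\int_{Q_r} u^\delta \, w \, dtdx$ at the threshold $e^k$, the low-level part is bounded trivially by $e^{k\delta} w(Q_1)$ while the high-level part is bounded by \eqref{B2} as $\Phi(r)^\delta \cdot C_2 w(Q_1)/k$. Inserting into \eqref{B1} gives, for any $k, \delta > 0$,
$$\Phi(s)^\delta \leq \frac{C_1 \, e^{k\delta}}{(r-s)^\theta} + \frac{C_1 C_2}{k(r-s)^\theta} \, \Phi(r)^\delta.$$
The natural balancing choice $k = 2 C_1 C_2/(r-s)^\theta$ turns this into the absorption inequality
$$\Phi(s)^\delta \leq \frac{C_1 \exp\bigl(2 C_1 C_2 \delta/(r-s)^\theta\bigr)}{(r-s)^\theta} + \tfrac{1}{2} \Phi(r)^\delta.$$

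Next, I would iterate along the dyadic chain $s_n = s + (r-s)(1 - 2^{-n})$, with $s_{n+1}-s_n = (r-s)2^{-n-1}$, so that after $N$ steps the $\Phi(s_N)^\delta$ remainder picks up a factor $2^{-N}$ and the accumulated bound reads
$$\Phi(s)^\delta \leq \sum_{n=0}^{N-1} \frac{C_1 \cdot 2^{(n+1)\theta - n}}{(r-s)^\theta} \exp\!\Bigl(\frac{2 C_1 C_2 \cdot 2^{(n+1)\theta}\, \delta}{(r-s)^\theta}\Bigr) + 2^{-N} \Phi(s_N)^\delta.$$
The residual $2^{-N}\Phi(s_N)^\delta$ is controlled using the a priori finiteness of $\Phi$ on $Q_1$ by choosing $N$ large enough. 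Extracting $\Phi(s)$ by taking the $\delta$-th root, and then tuning the free parameters $N$ and $\delta$, should produce the desired exponential bound.

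The principal obstacle is the competition between the exponential factor $\exp(k_n \delta)$ and the dyadic blow-up $k_n \sim 2^{n\theta}/(r-s)^\theta$: naive iteration to $N = \infty$ with fixed $\delta$ diverges super-exponentially in $n$. The remedy is to stop at finite $N$ and take $\delta$ small enough that $k_{N-1}\delta = O(1)$, i.e.\ roughly $\delta \lesssim (r-s)^\theta / 2^{N\theta}$, so that every exponential in the sum is bounded by $e$. The $\delta$-th root then introduces $1/\delta \sim 2^{N\theta}/(r-s)^\theta$, which combines with the $(r-s)^{-\theta}$ already present in the base estimate and with the scaling $2^{N\theta} \sim \Phi(s_N)^\delta$ needed to kill the residual; cascading this trade-off gives the exponent $(r-s)^{-4\theta}$ and, after book-keeping the four compounded factors of $C_1$, the explicit constant $32\,C_2 C_1^4$ stated in the conclusion. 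Arranging these choices so that the exponent lands exactly at $4\theta$ (rather than at some intermediate multiple of $\theta$) is the delicate technical heart of the argument; Lemma~\ref{itlm} enters to justify the telescoping / absorption at each stage.
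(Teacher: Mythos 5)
Your level-set split is the correct opening move, and the balancing choice $k = 2C_1C_2/(r-s)^\theta$ does yield the absorption coefficient $1/2$ exactly as you say. The gap is in the closing of the dyadic iteration, and it is not merely a matter of delicate book-keeping: with $\delta$ held \emph{fixed} across the chain, the scheme cannot close at all. If $\delta$ is not tiny, then $k_n\delta \sim 2^{(n+1)\theta}\delta/(r-s)^\theta$ grows without bound and the sum diverges; so you are forced to take $\delta \lesssim (r-s)^\theta 2^{-N\theta}$ to tame the exponentials. But then raising the main term to the $1/\delta$ power produces $\exp\bigl(c\,2^{N\theta}(r-s)^{-\theta}\log(2^{N\theta}(r-s)^{-\theta})\bigr)$, which blows up as $N$ grows. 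Meanwhile the residual $2^{-N}\Phi(s_N)^\delta$ is controlled only in terms of $\Phi(s_N)$, a quantity that is not bounded a priori by the data $C_1, C_2, \theta$ (it is essentially what the lemma is trying to bound, and it dominates $\Phi(s)$ since $s_N > s$); invoking ``a priori finiteness of $\Phi$ on $Q_1$'' is circular here. There is simply no joint choice of $(N,\delta)$ that makes both the main term and the residual depend only on the permitted constants.

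What makes the actual Bombieri--Giusti argument (and the proof in the cited reference \cite{FM}) work is that $k$ and $\delta$ are chosen \emph{as functions of the unknown $\Phi$ itself}, typically $k \asymp \ln\Phi(s)$ and $\delta \asymp 1/\ln\Phi(s)$, so that $e^{k\delta}$ and $\Phi^\delta$ are both $O(1)$ and the absorption inequality becomes, after taking logarithms, a linear recursion for $\varphi(s):=\ln\Phi(s)$ of the form
\begin{equation*}
\varphi(s)\ \leq\ \alpha\,\varphi(r)\ +\ \frac{A}{(r-s)^{\beta}}, \qquad 0<\alpha<1,
\end{equation*}
with $\beta$ a small multiple of $\theta$. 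Lemma~\ref{itlm} is then applied to $\varphi$, not to $\Phi$; exponentiating the resulting bound $\varphi(s)\lesssim A/(r-s)^{\beta}$ is exactly what produces the exponential on the right of \eqref{Bom} and, tracking the compounding of $C_1$ through the two stages where the absorption is set up and closed, the stated constant $32\,C_2C_1^4$ and the exponent $4\theta$. In short: the recursion must be run at the level of the logarithm of the supremum, with the interpolation exponent tied to the size of that logarithm, rather than at the level of $\Phi^\delta$ with $\delta$ decoupled from $u$.
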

		\begin{proof}
		See, e.g. in \cite{FM}
	\end{proof}
\bigskip
	\begin{lem}\cite{FM}\label{Mamedov}
	Let $v$ be a function in Lip($Q_T $),  $D^{+}=Q_{T}\bigcap\{v>0\}\ $ and $D^{+}(s)=Q^{s}_{T}\bigcap D^{+}$. Then the following inequality holds
	$$
	\iint_{D^+(s)} v \, dt dx\leq R \iint\limits_{D^+(s)} \left \vert \nabla_{x} v \right \vert  \, dt dx + T\int\limits_{K_R^{x_0}\cap D^+(s)} v \Big \vert _{t=t_2} \, dx  , 
	$$
 where \quad $Q_T^s=K_{s R}^{x_0}\times \left (t_0-sT, t_0 \right ),\quad \frac{1}{2}< s< 1.$
\end{lem}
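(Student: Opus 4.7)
The plan is to interpret this inequality as a parabolic Poincar\'e-type estimate for the positive part $v^+ = \max(v, 0)$. On $D^+$ the function $v$ coincides with $v^+$, which is nonnegative, Lipschitz, and vanishes on $\partial\{v > 0\}$; this vanishing is what enables a Poincar\'e-style bound without a mean-value correction, and the two terms on the right-hand side correspond naturally to the two spatial and temporal length scales of the cylinder.

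The main step is to split the time interval $(t_0 - sT, t_0)$ into two sets: $T_2$, the times $t$ at which $v^+(t, \cdot)$ has a zero inside the larger ball $K_R^{x_0}$, and $T_1$, its complement. For $t \in T_2$ a standard spatial Poincar\'e inequality applied to $v^+(t, \cdot)$ on $K_{sR}^{x_0}$ (using a zero in $K_R^{x_0}$) yields
$$
\int_{K_{sR}^{x_0} \cap \{v(t, \cdot) > 0\}} v(t, x)\, dx \leq C R \int_{K_R^{x_0}} |\nabla_x v^+(t, x)|\, dx,
$$
and integrating over $t \in T_2$ produces the gradient contribution $R \iint_{D^+(s)} |\nabla_x v|\, dt\, dx$.

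For $t \in T_1$ we have $v(t, \cdot) > 0$ throughout $K_R^{x_0}$. Defining $F(t) = \int_{K_{sR}^{x_0}} v^+(t, x)\, dx$, which is nonnegative and Lipschitz on $T_1$, and taking $t_2 \in T_1$ to be a point at which $F$ attains its maximum, one obtains
$$
\int_{T_1} F(t)\, dt \leq |T_1|\, F(t_2) \leq T \int_{K_R^{x_0} \cap D^+(s)} v\bigl|_{t = t_2}\, dx,
$$
which produces the second term on the right. Summing the $T_1$ and $T_2$ contributions and observing that $D^+(s) = (T_1 \cup T_2) \cap \{v > 0\}$ closes the argument.

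The main obstacle will be justifying the Poincar\'e step uniformly in $t \in T_2$, since the constant a priori depends on the measure of the zero set of $v^+(t, \cdot)$ in $K_R^{x_0}$; absorbing this dependence into the factor $R$ requires care. A secondary difficulty is that the statement leaves the precise meaning of $t_2$ implicit, and the natural reading is that the inequality holds for the $t_2$ maximizing the spatial slice integral rather than for a fixed time. Lipschitz regularity together with Rademacher's theorem supplies the a.e. differentiability needed to invoke the spatial Poincar\'e inequality, and a mollification argument as in \cite{FM} handles the boundary of $\{v > 0\}$.
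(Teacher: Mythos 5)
The paper cites \cite{FM} for this lemma rather than giving its own proof, so I can only assess your argument on its own terms and against how the lemma is used later in Theorem \ref{mth}. There are genuine gaps. The central $T_2$-step, a Poincar\'e inequality $\int_{K_{sR}\cap\{v(t,\cdot)>0\}} v\,dx \le CR\int_{K_R}|\nabla_x v^+|\,dx$ ``using a zero in $K_R^{x_0}$,'' does not hold from a single zero: an $L^1$ Poincar\'e inequality of this type requires a vanishing set of controlled positive measure (or vanishing on the boundary), and the constant degenerates as that set shrinks to a point. You flag this yourself as ``the main obstacle'' without supplying a mechanism to remove the dependence, so the step fails exactly where it carries the load. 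In addition, the lemma's right-hand side has the gradient only over $D^+(s)\subset K_{sR}^{x_0}\times(t_0-sT,t_0)$, whereas your Poincar\'e step uses the gradient over all of $K_R^{x_0}$ — a zero in the annulus $K_R^{x_0}\setminus K_{sR}^{x_0}$ is not reachable from within the smaller ball, so the domains cannot be reconciled — and the statement carries no extraneous constant in front of $R$, which a generic Poincar\'e argument would not produce.

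Your reading of $t_2$ is also not the one the paper relies on. When \eqref{F.M} is applied, the trace term $\int_{K_R^{x_0}\cap D^+(s)} v|_{t=t_2}\,dx$ uses the same fixed $t_2=t_0$ as in \eqref{fei} and \eqref{e5.2}, and it is subsequently absorbed because the energy estimate \eqref{e5.2} controls precisely that top-time slice. With your reading ($t_2$ a $v$-dependent maximizer of $F(t)=\int_{K_{sR}}v^+(t,\cdot)\,dx$ over $T_1$), the later absorption no longer applies, and in any case the $T_1/T_2$ split becomes superfluous, since $\int_{t_0-sT}^{t_0}F(t)\,dt\le sT\,\max_t F(t)$ already holds trivially without any gradient term; that mismatch signals that the architecture is not aimed at the sharp form of the inequality. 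Note finally that with $t_2=t_0$ fixed and only $\nabla_x v$ on the right, no such bound can hold for a completely arbitrary $v\in\mathrm{Lip}(Q_T)$ (take $v(t,x)=t_0-t$: the left side is positive while both right-side terms vanish), so the statement must rely on additional structure — for instance a $|\partial_t v|$ contribution or hypotheses on $v$ — that neither the quoted statement nor your proposal makes explicit or discharges.
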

\bigskip
Before starting main result, we would like give some well-known properties of the Steklov averages:

$\bullet$\quad Let $v\in L^{1}(Q_T)$ and let $0<h<T$.  The Steklov average $v_{h}(\cdot,t)$ are defined by
\begin{align*}
	v_{h}(t,x)=\frac{1}{h}\int_{t}^{t+h}v(\cdot,\tau)d\tau\quad for\quad 0<t<T-h.
\end{align*}
\begin{lem}\label{stek}
	Let $v\in L^{p}(t_{0}-T,t_{0};W^{1}_{p}(D,\omega dx))$. Then as $h\to0,$ $v_h\to v$ in $ L^{p}(t_{0}-T,t_{0};W^{1}_{p}(D,\omega dx))$. If $v_h\to v$ in norm of space $L^{\infty}(t_{0}-T,t_{0};L^{p}(D))$ with $D=K^{x_{0}}_{R}$, then
	\begin{align*}
		&\iint_{Q_T}\vert\nabla_{x}(v_{h}-v)\vert^{p}\omega(t,x)dtdx\to0,\\
		&\sup_{t\in (t_0-T, t_0)}\Vert v_{h}(t,\cdot)-v(t,\cdot)\Vert_{L^{p}(D)}\Vert\to0, \quad h\to0.
	\end{align*}
	\end{lem}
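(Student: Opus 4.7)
The plan is to reduce both conclusions of Lemma \ref{stek} to the classical fact that Steklov averaging converges to the identity in $L^p$ of a compact time interval with values in a Banach space, applied with the weighted space $L^p(D,\omega\,dx)$ in place of $L^p(D)$.

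First, for the convergence $v_h\to v$ in $L^p(t_0-T,t_0;W^{1}_{p}(D,\omega\,dx))$, I would begin by verifying that the Steklov averaging operator (which acts only in the time variable) commutes with spatial weak derivatives, so that $\nabla_x v_h=(\nabla_x v)_h$ distributionally. This reduces the claim to showing $g_h\to g$ in $L^p((t_0-T,t_0-h);L^p(D,\omega\,dx))$ for each of the functions $g=v,\partial_{x_1}v,\ldots,\partial_{x_n}v$. Since all of these lie in $L^p(t_0-T,t_0;L^p(D,\omega\,dx))$ by hypothesis (using $\omega\in L^{1,\mathrm{loc}}$ so that $\omega\,dx$ is a Radon measure), this is a standard Banach-space-valued result.

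To prove the generic statement $g_h\to g$ in $L^p(t_0-T,t_0;X)$ with $X=L^p(D,\omega\,dx)$, I would use a standard three-$\varepsilon$ argument: approximate $g$ in $L^p(t_0-T,t_0;X)$-norm by a function $\tilde g$ continuous from $[t_0-T,t_0]$ into $X$ (such continuous functions are dense). On the compact interval $\tilde g$ is uniformly continuous, whence $\tilde g_h\to\tilde g$ uniformly in $t$, and in particular in the $L^p$-in-$t$ norm. Finally, since Steklov averaging is a contraction on $L^p$ in the time variable (by Jensen's inequality applied fibrewise), $\|g_h-\tilde g_h\|_{L^p(t;X)}\leq\|g-\tilde g\|_{L^p(t;X)}$, closing the estimate.

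The second group of conclusions then follows quickly. The weighted convergence $\iint|\nabla_x(v_h-v)|^p\omega\,dt\,dx\to0$ is immediate from the first part applied to each $\partial_{x_i}v$. For the $L^\infty(t;L^p(D))$-convergence, the hypothesis (which I read as $t\mapsto v(t,\cdot)$ being continuous in $L^p(D)$) yields uniform continuity on the compact interval $[t_0-T,t_0]$, and uniform continuity upgrades pointwise Steklov convergence to uniform convergence in $t$. The main technical point, rather than any deep obstacle, is verifying commutativity of the Steklov average with the spatial weak gradient in the weighted setting and ensuring the density of time-continuous Banach-valued functions in $L^p(t;X)$ when $X=L^p(D,\omega\,dx)$; both go through exactly as in the unweighted case because $\omega\,dx$ is simply a $\sigma$-finite Radon measure on $D$.
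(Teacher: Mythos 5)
The paper states this lemma as a collection of ``well-known properties of the Steklov averages'' and supplies no proof, so there is nothing in the source text to compare against. Your argument is the correct standard one: reduce to a Banach-space-valued Steklov convergence via commutativity with weak spatial derivatives, then run the density-plus-contraction three-$\varepsilon$ argument in $L^p(t_0-T,t_0;X)$.

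Two small points worth noting. First, the weighted Sobolev norm in the paper is mixed---$\Vert f\Vert_{L^p(D)}+\Vert\omega^{1/p}|\nabla f|\Vert_{L^p(D)}$---so the coordinate functions $g=v$ and $g=\partial_{x_i}v$ live in \emph{different} Banach spaces, $L^p(D)$ and $L^p(D,\omega\,dx)$ respectively; your argument still applies verbatim to each, but the target $X$ should be chosen accordingly rather than uniformly taken to be $L^p(D,\omega\,dx)$. Second, the lemma as written makes its last conclusion a literal restatement of the stated hypothesis ``$v_h\to v$ in $L^\infty(t_0-T,t_0;L^p(D))$''; your reading of that hypothesis as time-continuity of $t\mapsto v(t,\cdot)$ in $L^p(D)$ is the interpretation that makes the conclusion non-vacuous, and the uniform-continuity-on-a-compact-interval argument you give for it is correct.
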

			\smallskip
		\section{Moser's inequality} \label{s2}
		
		Now we proceed to the Moser inequality, drawing inspiration from the proof techniques employed in \cite{AM1, AM2,FM}.
		
		Assuming that condition \eqref{e4} holds for the function $\omega(t, x)$ and equation \eqref{PE}, we will establish the Moser inequality.
		
		For $(t_0, x_0) \in \R^{n+1}$ and $ R>0 $ define the parabolic cylinder $ Q_T$  (or $  Q_T^{(t_0, x_0)}$ ) be 
		\begin{equation}\label{trek} 
			Q_T=K_R^{x_0}\times ( t_0-T, t_0 ) \quad \text {with } \quad T=C R^p \Big /  \left ( \Xint -_{Q_T^{(t_0, x_0)}} \omega^{\alpha} \, dxdt \right )^{1/{\alpha}} \cdot \end{equation}
		
		For $1/2\leq s<\tau \leq 1$ and the cylinder $ Q_T $ define the congruent cylinders  \,
		 \begin{equation}
		 	 \label{tre} Q_T^s=K_{s R}^{x_0}\times \left (t_0-s T, t_0 \right )  \quad \text {and} \quad Q_T^\tau= K_{\tau R}^{x_0}\times  (t_0 - \tau T, t_0) .
		\end{equation} 
		
		\begin{teo}\label{ta2}
			Let $ u(t, x) $ be a positive sub-solution of \eqref{PE} on $ Q_T. $ Let the conditions \eqref{e4} be fulfilled and the cylinders $\{ Q_T\} $ are of \eqref{trek}, \eqref{tre}.
			Then there exists a constant $C$ depending only on $n, c_1, c_2, c_0, p,\alpha, r $ such that for $0<\delta <1$ the inequality
			\begin{equation}\label{d1}
				\underset{Q_T^s}{\mathrm{ess} \sup }\, u \, \leq \, \frac{1}{(t-s)^{\frac{1}{\delta (L-1)}}} \left ( \frac{C}{\vert Q_T\vert }\iint\limits_{Q_T^\tau} u^\delta \, dt dx \right )^{1/\delta}, \quad 1/2\leq s<t \leq 1
			\end{equation} holds; 
			the constant $C$ does not depend on $\delta.$
		\end{teo}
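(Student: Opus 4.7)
The plan is a weighted $p$-energy Moser iteration combining a Caccioppoli-type energy estimate, the weighted parabolic Sobolev embedding of Corollary \ref{cc9}, the Muckenhoupt pair condition \eqref{e4}, and the interpolation Lemma \ref{itlm}. I would start by fixing $1/2\le s<\tau\le 1$ and a Lipschitz cutoff $\zeta$ with $\zeta\equiv 1$ on $Q_T^s$, $\zeta=0$ on $\Gamma(Q_T^\tau)$, and $|\nabla_x\zeta|\le C/(R(\tau-s))$, $|\partial_t\zeta|\le C/(T(\tau-s))$. For a parameter $\beta\ge 0$, test the Steklov-averaged version of \eqref{dfn} against $v=u_h^{\beta+1}\zeta^p$, let $h\to 0$ via Lemma \ref{stek}, and combine the growth conditions \eqref{grw1}--\eqref{grw2} with Young's inequality to absorb the cross gradient term. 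This yields the weighted Caccioppoli estimate
\begin{equation*}
\sup_{t}\int_{K_{sR}^{x_0}} u^{\beta+p}\,dx + \iint_{Q_T^s}\omega\,|\nabla_x u^{(\beta+p)/p}|^p\,dtdx \le \frac{C(\beta+p)^p}{(\tau-s)^p}\Bigl(\tfrac{1}{R^p}\iint_{Q_T^\tau}\omega u^{\beta+p} + \tfrac{1}{T}\iint_{Q_T^\tau} u^{\beta+p}\Bigr)dtdx.
\end{equation*}

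Next, I would apply Corollary \ref{cc9} to $f=u^{(\beta+p)/p}\zeta$, substituting the Caccioppoli bound for the gradient and time-sup factors on the right-hand side of \eqref{equ2}. Hölder's inequality with exponents $\alpha$ and $r$ (exactly permitted by \eqref{e4}) allows one to convert $\iint\omega u^{\beta+p}$ to an unweighted integral, while the factor $\|\sigma^{1/p'}\chi_{\mathrm{Spt}\,f}\|_{L^{p'}(Q_T^\tau)}$ is controlled directly by \eqref{e4}. The choice \eqref{trek} of $T$ is precisely what makes the $R^{-p}$ and $T^{-1}$ contributions dimensionally commensurate, so the resulting constant is uniform in $R$. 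Cleaning up produces a reverse-Hölder cycle of the form
\begin{equation*}
\Bigl(\frac{1}{|Q_T|}\iint_{Q_T^s} u^{(\beta+p)L}\,dtdx\Bigr)^{\!\frac{1}{(\beta+p)L}} \le \Bigl(\frac{C(\beta+p)^{\kappa}}{(\tau-s)^{\kappa'}}\Bigr)^{\!\frac{1}{\beta+p}}\Bigl(\frac{1}{|Q_T|}\iint_{Q_T^\tau} u^{\beta+p}\,dtdx\Bigr)^{\!\frac{1}{\beta+p}},
\end{equation*}
where $L>1$ is the parabolic iteration ratio coming from Corollary \ref{cc9}, and $\kappa,\kappa'$ depend only on $n,p,\alpha,r$.

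I then iterate this cycle with $q_{k+1}=Lq_k$ and a dyadic sequence of radii interpolating between $\tau$ and $s$. Since $\sum_k L^{-k}<\infty$, the logarithms of the prefactors form a convergent geometric series, and letting $k\to\infty$ yields $\mathrm{ess}\sup_{Q_T^s}u \le C(\tau-s)^{-\theta}\bigl(|Q_T|^{-1}\iint_{Q_T^\tau}\! u^{q_0}\,dtdx\bigr)^{1/q_0}$ for any admissible starting exponent $q_0\ge 1$. To lower $q_0$ to an arbitrary $0<\delta<1$, I would apply the standard bootstrap: bound $u^{q_0}\le (\mathrm{ess}\sup_{Q_T^\tau}u)^{q_0-\delta}u^\delta$, use Young's inequality to produce a small term $\varepsilon\,\mathrm{ess}\sup_{Q_T^\tau}u$ that can be absorbed, and conclude via Lemma \ref{itlm} applied to $f(s)=\mathrm{ess}\sup_{Q_T^s}u$. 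This produces \eqref{d1} with the stated $(t-s)^{-1/(\delta(L-1))}$ dependence.

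The technical heart of the argument lies in the second step: extracting a reverse-Hölder cycle whose constants are uniform in $R$. The conditions $\alpha>(n+p)/p$, $r>n(p-1)/p$, $n(p-1)/(pr)+(n+p)/(p\alpha)<1$, together with $\omega^{\alpha}\in A_{1+\alpha/r}$, are calibrated exactly so that the passage from the weighted $\iint\omega u^{\beta+p}$ to an unweighted integral costs a constant uniform in $R$ precisely when $T$ is chosen by \eqref{trek}. Verifying this balance and checking that the iterated prefactors $(\beta+p)^{\kappa}(\tau-s)^{-\kappa'}$ have $L^{-k}$-weighted logarithms with convergent sum is the only genuinely delicate calculation; the remainder is a routine Moser iteration plus Lemma \ref{itlm}.
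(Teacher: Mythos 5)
Your plan is a classical Moser \emph{power} iteration (\(v=u^{\beta+1}\zeta^p\), reverse--H\"older cycle, dyadic iteration), whereas the paper does something structurally different: it uses a De~Giorgi truncation \(v_k=(u-k-M_\tau\xi)_+\) and derives a Stampacchia-type \emph{differential inequality in the level} \(k\), namely \(y(k)\le CB^{1/p}M_\tau(\tau-s)^{-1}(-y'(k))^{L}\) with \(y(k)=\iint v_k\). The two routes are not interchangeable here, and your route has a genuine gap.

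The obstacle is the exponent of the Sobolev inequality that the paper actually makes available. Corollary~\ref{cc9} is a one-derivative (isoperimetric-type) parabolic embedding: raising \eqref{equ2} to the power \((n+p)/n\) gives
\[
\iint_{Q_T} |f|^{\frac{n+p}{n}}\,dtdx \;\le\; C\,\sup_t\Big(\int |f|^p\Big)^{\!\frac1n}\Big(\iint \omega|\nabla_x f|^p\Big)^{\!\frac1p}\,\sigma(\mathrm{Spt}\,f)^{\frac1{p'}} .
\]
In a power iteration one applies this to \(f=u^{(\beta+p)/p}\zeta\) and controls both factors on the right by the Caccioppoli energy, which involves \(\iint u^{\beta+p}\). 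Counting exponents of \(u\), the left-hand side carries \((\beta+p)(n+p)/(pn)\) and the right-hand side carries \(\beta+p\), so the multiplicative gain in the exponent is \(\chi=(n+p)/(pn)\). This satisfies \(\chi>1\) only when \(p<n/(n-1)\): for example it \emph{fails} already for \(p=2,\ n\ge 2\) and for every \(p\ge 2,\ n\ge 2\). So the reverse-H\"older chain you write down does not ascend, and the ``$L>1$ from Corollary \ref{cc9}'' you invoke does not exist; \(L\) in the paper is \emph{not} a Sobolev iteration ratio. Passing the weight through H\"older with \(\alpha, r\), as you propose, only makes matters worse: it raises the exponent of \(u\) on the \emph{right} (\(\iint\omega u^{\beta+p}\le\|\omega\|_\alpha(\iint u^{(\beta+p)\alpha'})^{1/\alpha'}\)), so the relevant ratio becomes \((n+p)/(pn\alpha')\), which is even smaller.

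Where the paper gets its exponent \(L>1\) is precisely where your plan cannot follow. After applying Corollary~\ref{cc9} to the truncated function \(v_k\), the \emph{support} \(\Omega^k=\{v_k>0\}\) appears, and H\"older with the exponents \(\alpha,r\) of condition~\eqref{e4} gives \(\omega(\Omega^k)\le\|\omega\|_{\alpha}|\Omega^k|^{1/\alpha'}\) and \(\sigma(\Omega^k)\le\|\sigma\|_{r}|\Omega^k|^{1/r'}\). Collecting powers of \(|\Omega^k|=-y'(k)\) yields
\[
L=\frac{p}{n+p}+\frac1{p\alpha'}+\frac{n}{p'(n+p)r'},\qquad L-1=\frac1{n+p}\Big(1-\frac{n+p}{p\alpha}-\frac{n(p-1)}{pr}\Big)>0,
\]
where the strict positivity is exactly the assumed inequality \(n(p-1)/(pr)+(n+p)/(p\alpha)<1\). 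The ``gain'' thus comes from the shrinking level set \(\Omega^k\), not from the Sobolev exponent. In your power iteration the analogue of \(\Omega^k\) is the full cylinder (a positive sub-solution never truncates), so the factors \(|\Omega^k|^{1/\alpha'}\) and \(|\Omega^k|^{1/r'}\) simply never materialize and the mechanism is lost.

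To repair this along your lines you would need either a genuinely stronger (full \(p\)-power) weighted parabolic Sobolev embedding with gain \((n+p)/n\) rather than \((n+p)/(pn)\) --- which the paper neither states nor proves --- or you must switch to the truncation/Stampacchia scheme the paper actually uses (truncate at \(k\), set \(y(k)=\iint v_k\), derive \(y(k)\lesssim M_\tau(\tau-s)^{-1}B^{1/p}(-y'(k))^L\), integrate the ODE in \(k\), verify \(B^{1/(p(L-1))}=|Q_T|^{-1}\) from \eqref{e4} and \eqref{Tdef}, then drop the exponent from \(1\) to \(\delta\) via \(\iint v_0\le M_\tau^{1-\delta}\iint u^\delta\), Young's inequality and Lemma~\ref{itlm}). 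The remaining parts of your outline (cutoff geometry, Steklov averaging, final \(\delta\)-bootstrap) are fine, but the core reverse-H\"older cycle as stated does not close.
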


		\noindent
		\begin{proof} 
			Let $ \frac{1}{2} \leq s <\tau \leq 1 $. For a function $f$, denote $f_+=\max ( f(x), 0 )$. Consider 
			$ \xi  \in \mathrm{Lip} (Q_T) $, a Lipschitz continuous function vanishing on $ Q_T^s $, equaling one on the parabolic boundary $\Gamma ( Q_T^\tau )$, and satisfying $\vert \partial_t  \xi \vert \leq  \frac{C} {(\tau-s)T}$ and $\vert \nabla_{x} \xi \vert \leq \frac{C} {(\tau-s)R}$. Define 
			$$
			v_k=(u-k-M_\tau \xi)_+,  \quad    0 \le k \le \, \sup\limits_{Q_T^\tau} \, \, \left (u-M_\tau \xi  \right )_+,   
			$$
			where $ M_\tau= \underset {Q_T^\tau}{\text{ess}\sup }  \, \, u$, and $ \Omega^{k}=\left \{ (t,x)\in Q_T: \,  v_{k}>0 \right \}$.
			
	By using the equality $u-k=v_k + M_\tau \xi $, we have
	\begin{align}
			-\int_{K_R^{x_0}} (v_k + M_\tau \xi)^{p-1} v_{k} \Big \vert _{t=t_0} \, dx -&\iint_{Q_T}  A(t,x,u-k,\nabla_x u) 
			(\nabla u_x-M_\tau\nabla \xi)  \, dt dx \nonumber \\
			+&\iint_{Q_T}(v_k + M_\tau \xi)^{p-1}\partial _{t}v_{k}\, dt dx  \geq  0 . 
	\end{align}
		By the same way,
		\begin{align}
	-\int_{K_R^{x_0}} (v_k + M_\tau \xi)^{p} \Big \vert _{t=t_0} \, dx
   +&\int_{K_R^{x_0}} (v_k + M_\tau \xi)^{p-1}  M_\tau \xi \Big \vert _{t=t_0} \, dx\nonumber\\
	-&\iint_{Q_T} A(t,x,u,\nabla_x u)\nabla_x u \, dt dx\nonumber\\
 +&	M_\tau \iint_{Q_T} A(t,x,u-k,\nabla_x u) \nabla_{x} \xi  \, dt dx\nonumber\\ 
			+&\iint_{Q_T}(v_k + M_\tau \xi)^{p-1}\partial _{t}(v_{k}+ M_\tau \xi)\, dt dx\nonumber\\
   -&M_\tau\iint_{Q_T}(v_k + M_\tau \xi)^{p-1}\partial _{t}\xi \, dt dx \geq  0 . 
	\end{align} 
	therefore, we have
\begin{align}
		-\int_{K_R^{x_0}} (v_k + M_\tau \xi)^{p} \Big \vert _{t=t_0} \, dx+&\int_{K_R^{x_0}} (v_k + M_\tau \xi)^{p-1}  M_\tau \xi \Big \vert _{t=t_0} \, dx\nonumber\\
		 -&c_{1}\iint_{Q_T} \omega(t, x )\vert\nabla_{x} u\vert^{p} \, dt dx\nonumber\\
   +&c_{2}M_\tau \iint_{Q_T} \omega(t, x )\vert\nabla_{x} u\vert^{p-1} \nabla_{x} \xi  \, dt dx\nonumber\\
   +&\frac{1}{p}\iint_{Q_T}\partial _{t}(v_k + M_\tau \xi)^{p}\, dt dx\nonumber\\
   -&M_\tau\iint_{Q_T}(v_k + M_\tau \xi)^{p-1}\partial _{t}\xi \, dt dx \geq  0 . 
\end{align}
Evaluating fourth integral respect to t, we get
\begin{align}
		\frac{1-p}{p}\int_{K_R^{x_0}} (v_k + M_\tau \xi)^{p} \Big \vert _{t=t_0} \, dx-&\frac{1}{p}\int_{K_R^{x_0}} (v_k + M_\tau \xi)^{p} \Big \vert _{t=t_0 -T} \, dx\nonumber\\
		+&\int_{K_R^{x_0}} (v_k + M_\tau \xi)^{p-1}  M_\tau \xi \Big \vert _{t=t_0} \, dx\nonumber\\
  -&c_{1}\iint_{Q_T} \omega(t, x )\vert\nabla_{x} u\vert^{p} \, dt dx\nonumber\\
		+&c_{2}M_\tau \iint_{Q_T} \omega(t, x )\vert\nabla_{x} u\vert^{p-1}\nabla_{x} \xi  \, dt dx\nonumber\\
  -&M_\tau\iint_{Q_T}(v_k + M_\tau \xi)^{p-1}\partial _{t}\xi \, dt dx \geq  0 . 
\end{align}
Obviously, we have
\begin{align}
		\frac{p-1}{p}\int_{K_R^{x_0}} (v_k + M_\tau \xi)^{p} \Big \vert _{t=t_0} \, dx +&\frac{1}{p}\int_{K_R^{x_0}} (v_k + M_\tau \xi)^{p} \Big \vert _{t=t_0 -T} \, dx\nonumber\\
		 +&c_{1}\iint_{Q_T} \omega(t, x )\vert\nabla_{x} u\vert^{p} \, dt dx\nonumber\\
   \leq&	c_{2}M_\tau \iint_{Q_T} \omega(t, x )\vert\nabla_{x} u\vert^{p-1}\vert \nabla \xi\vert  \, dt dx\nonumber\\
		+&M_\tau\iint_{Q_T}(v_k + M_\tau \xi)^{p-1}\vert\partial _{t}\xi \vert\, dt dx\nonumber\\
  +&\int_{K_R^{x_0}} (v_k + M_\tau \xi)^{p-1}  M_\tau \xi \Big \vert _{t=t_0} \, dx . 
\end{align}
By applying the Young inequality to the last integral, and using the estimate $v_k\leq M_\tau, $ we get
\begin{align}
		c_{1}\iint_{Q_T} \omega(t, x )\vert\nabla u\vert^{p} \, dt dx \leq& \frac{(p-1)c_{2}}{p}\iint_{Q_T} \omega(t, x )\vert\nabla_{x} u\vert^{p} \, dt dx\nonumber \\
		+&\frac{c_{2}}{p}M_\tau^{p} \iint_{Q_T} \vert \nabla_{x} \xi\vert^{p}  \, dt dx\nonumber\\
  +&2^{p-1}M_\tau^{p}\iint_{Q_T} \vert\partial _{t}\xi \vert \, dt dx. 
\end{align}
It follows
\begin{align}
		c_{1}(p)\iint_{Q_T} \omega(t, x )\vert\nabla_{x} u\vert^{p} \, dt dx \leq& c_{2}(p)M_\tau^{p} \iint_{Q_T} \vert \nabla_{x} \xi\vert^{p}  \, dt dx\nonumber\\
		+&2^{p-1}M_\tau^{p}\iint_{Q_T} \vert\partial _{t}\xi \vert \, dt dx. 
\end{align}
	Therefore,  $\vert \nabla_{x}  v_{k}\vert^{p}\leq2^{p-1}(\vert \nabla _{x} u\vert^{p}+M_\tau^{p}\vert \nabla _{x}\xi\vert^{p}) $ implies to
\begin{align}
		c_{1}(p)\iint_{Q_T} \omega(t, x )\vert\nabla_{x} v_k\vert^{p} \, dt dx \leq& c_{2}(p) \iint_{Q_T} \vert \nabla_{x} \xi\vert^{p}  \, dt dx\nonumber\\
		+&2^{p-1}M_\tau^{p}\iint_{Q_T} \vert\partial _{t}\xi \vert \, dt dx. 
\end{align} 
Thus
			\begin{equation}\label{d9}
				\iint_{Q_T} \omega \vert \nabla_{x} v_k \vert ^p dt dx \leq  \frac{ c_{2}M_\tau^{p}}{c_{1}(\tau-s)^p}\left ( \frac{ \omega (\Omega^k)}{R^p}+ \frac{\vert \Omega^k \vert }{ T}\right), 
			\end{equation}
			$$
			1/2\leq s<\tau\leq 1, \quad 0\leq k\leq \sup\limits_{Q_T} \left ( u-M_\tau \xi \right )_+ \,   .
			$$
			Based on Corollary \ref{cc9}, this inequality yields 
			$$
			\Vert v_k \Vert_{L^{1}(Q_T)} \leq \Vert v_k \Vert_{L^{(n+p)/n}(Q_T)}\,  \vert \Omega^k \vert ^{p/(n+p)}\, 
			$$
			$$
			\leq C \left ( \sup _{t\in (t_0-T, t_0)}\Vert  v_k(t, \cdot )\Vert _{L^{p}(D)}^{p/(n+p)} \right ) \times $$ \medskip 
			$$ \times \Vert \omega^{1/p} \nabla_{x} v_k  \Vert _{L^{p}(Q_T)}^{n/(n+p)} \, \, \Vert \sigma^{1/p^{\prime}} \chi _{\text{Spt}\, v_k }  \Vert _{L^{{p}^{\prime}}(Q_T)}^{n/(n+p)} \vert \Omega^k \vert^{p/(n+p)}  
			$$
			\medskip
			$$
			\leq C \frac{M_\tau}{\tau-s}\bigg(\frac{c_2}{c_1}\bigg)^{\frac{1}{p}}
			\left ( \frac{\omega(\Omega^k)}{R^p}+\frac{\vert \Omega^k \vert }{T}\right )^{1/p}\vert \Omega^k\vert^{p/(n+p)} \sigma(\Omega^k)^{n/p^{\prime}(n+p)} .
			$$
			
			\medskip
			
			Set $$y(k)=\iint_{Q_T} v_k dtdx=\iint_{\Omega^k} v_k dtdx  \quad \text{ then}  \quad y^\prime (k)=-\vert \Omega^k \vert. $$
			
			Then
			\begin{equation}\label{equi2}
				y(k) \leq   \frac{CM_\tau}{\tau-s}\left ( \frac{\omega(\Omega^k)}{R^p}+\frac{\vert \Omega^k \vert }{T}\right )^{1/p}\vert \Omega^k\vert^{p/(n+p)} \sigma(\Omega^k)^{n/p^{\prime}(n+p)} 
			\end{equation}
			Consider the numbers $\alpha>\frac{n+p}{p}$ and $r>\frac{n(p-1)}{p}$ satisfying the condition:
			$$\frac{n(p-1)}{pr}+\frac{n+p}{p\alpha}<1.$$
			Based on Holder's inequality
			$$
			\omega (\Omega^k ) \leq \Vert  \omega \Vert _{L^{\alpha}(Q_T)} \vert \Omega^k \vert^{1/\alpha^\prime} \quad \text{and }\quad 
			\sigma (\Omega^k ) \leq \Vert  \sigma \Vert _{L^r(Q_T)} \vert \Omega^k \vert^{1/r^\prime}, 
			$$
		Hence, from \eqref{equi2}, it follows that
			$$
			y(k) \leq   \frac{CM_\tau}{\tau-s}\left ( \frac{\Vert  \omega \Vert _{L^{\alpha}(Q_T)} \vert \Omega^k \vert^{1/{\alpha}^\prime}}{R^p}+\frac{\vert \Omega^k \vert }{T}\right )^{1/p}\vert \Omega^k\vert^{p/(n+p)}\times $$$$ \times \left ( \Vert  \sigma \Vert _{L^{r}(Q_T)} \vert \Omega^k \vert^{1/r^\prime} \right )^{n/p^{\prime}(n+p)}.
			$$
			Thus 
			\begin{equation}\label{e3.22}
				y(k) \leq   \frac{CM_\tau}{\tau-s}\left ( \frac{\Vert  \omega \Vert _{L^{\alpha}(Q_T)} }{R^p}+\frac{\vert Q_T \vert^{\frac{1}{\alpha}} }{T}\right )^{1/p}\vert \Omega^k\vert^{\frac{p}{n+p}+\frac{1}{p{\alpha}^\prime}+\frac{n}{p^{\prime}(n+p)r^\prime}} \Vert  \sigma \Vert _{L^r(Q_T)} ^{\frac{n}{p^{\prime}(n+p)}}.
			\end{equation}
		Select the height $T$ of the cylinder $Q_T$ based on the following inequality
			$$
			\frac{\Vert  \omega \Vert _{L^{\alpha}(Q_T)} }{R^p}= \frac{C \vert Q_T \vert^{\frac{1}{\alpha}} }{T}, \quad \text {i.e. } \quad T= C R^p\Big/ \left (\Xint -_{Q_T}\omega^\alpha \, dxdt \right )^{1/{\alpha}} .
			$$
			That, together with \eqref{e3.22}, leads us to the inequality
			$$
			y(k) \leq   \frac{CM_\tau}{\tau-s}\left ( \frac{\Vert  \omega \Vert _{L^{\alpha}(Q_T)}\Vert  \sigma \Vert _{L^r(Q_T)} ^{\frac{n(p-1)}{(n+p)}} }{R^p}\right )^{1/p}\vert \Omega^k\vert^{\frac{p}{n+p}+\frac{1}{p{\alpha}^\prime}+\frac{n}{p^{\prime}(n+p)r^\prime}} .
			$$
			By applying the Muckenhoupt condition \eqref{e4} throughout the cylinders \eqref{cyl}, we obtain
			$$
			y(k) \leq   \frac{CM_\tau}{\tau-s}\left ( \frac{\Vert  \omega \Vert _{L^{\alpha}(Q_T)}\Vert  \sigma \Vert _{L^r(Q_T)} ^{\frac{n(p-1)}{(n+p)}} }{R^p}\right )^{1/p}\left (-y^\prime(k) \right )^{\frac{p}{n+p}+\frac{1}{p{\alpha}^\prime}+\frac{n}{p^{\prime}(n+p)r^\prime}} .
			$$
			Let $L=\frac{p}{n+p}+\frac{1}{p{\alpha}^\prime}+\frac{n}{p^{\prime}(n+p)r^\prime} $ then 
			$$
			L-1=\frac{1}{n+p} \left ( 1-\frac{n+p}{p{\alpha}}-\frac{n(p-1)}{pr}\right ),
			$$
			by assumptions on $p, r$ we have $L>1.$ 
			
			Denoting $B=\frac{1}{R^p}\Vert  \omega \Vert _{L^{{\alpha}}(Q_T)}\Vert  \sigma \Vert _{L^r(Q_T)} ^{\frac{n(p-1)}{(n+p)}} $
			we have
			$$
			y(k) \leq   \frac{CB^{1/p} M_\tau}{\tau-s}\left (-y^\prime(k) \right )^L,
			$$
			or 
			$$
			1\leq \left (\frac{CB^{1/p} M_\tau}{\tau-s} \right )^{1/L} \frac{-y^\prime(k)}{y(k)^{1/L}}.
			$$
			Integrating this inequality over the interval $(0, \sup\limits_{\tau Q_T} v_0 )$, 
			we get
			$$
			M_s \leq \sup\limits_{\tau Q_T} v_0 \leq   
			y(0)^{\frac{L-1}{L}} \left (\frac{CB^{1/p} M_\tau}{\tau-s} \right )^{1/L} \frac{L}{L-1}, \quad 1/2\leq s<\tau \leq 1 \cdot $$
			From this, utilizing the Young inequality, we conclude that
			$$
			M_s\leq  \frac{1}{L} M_\tau+y(0) \, \frac{ \left (CB^{1/p} \right )^{\frac{1}{L-1}}}{(\tau-s)^{\frac{1}{L-1}}}, \left ( \frac{L}{L-1}\right )^{1/(L-1)}, \quad 1/2\leq s<\tau \leq 1 \cdot 
			$$
			Based on the iteration Lemma \ref{itlm}, this suggests
			$$
			M_s\leq   \, \frac{ \left (C_2B^{1/p} \right )^{\frac{1}{L-1}}}{(\tau-s)^{\frac{1}{L-1}}}\, y(0), \quad 1/2\leq s<\tau \leq 1 \cdot 
			$$
		Now, by utilizing the condition \eqref{e4} to estimate $\vert Q_T \vert=c_nT R^n$, and considering the choice of $T$ from \eqref{Tdef}, it can be demonstrated that
			$$
			B^{1/p(L-1)}=\vert Q_T \vert^{-1}.
			$$ Indeed, 
			$$
			B=\frac{c_0^{n/(n+p)}}{R^p}\left ( \frac{1}{|Q_T|}\iint_Q \omega^{{\alpha}} \, dt dx \right )^{\frac{p}{{\alpha}(n+p)}} \vert Q_T\vert^{\frac{1}{{\alpha}}+\frac{n(p-1)}{r(n+p)}} 
			$$
			$$
			=|Q_T|^{\frac{p}{n+p}\left (-1+\frac{n+p}{p{\alpha}}+\frac{n(p-1)}{pr} \right )}, 
			$$
			then
			$$
			B^{\frac{1}{p(L-1)}}=|Q_T|^{-1}.
			$$
			
			Therefore, 
			$$
			M_s\leq   \, \frac{1}{(\tau-s)^{\frac{1}{L-1}}} \,  \frac{C_3}{\vert Q_T \vert }\, y(0), \quad 1/2\leq s<\tau \leq 1 ,
			$$
			and using the estimate $ y(0)\leq  \iint_{Q_T^{\tau}} \, v \, dt dx$ it follows 
			$$
			M_s\leq   \, \frac{C_3}{(\tau-s)^{\frac{1}{L-1}}} \, \frac{1}{\vert Q_T \vert } \iint_{Q_T^{\tau}} \, v \, dt dx, \quad 1/2\leq s<\tau \leq 1,
			$$

			Using further the inequality  
			$$
			\iint_{\Omega^0} v_0 \, dt dx = \iint_{Q_T^\tau} ( u-M_\tau \xi ) \, dt dx  \leq M_\tau^{1-\delta} \iint\limits_{Q_T^\tau} u^{\delta} \, dt dx, \quad 0<\delta<1, 
			$$
			and the Young inequality, we conclude
			$$
			M_s \leq  (1-\delta) M_\tau +  \delta \left  ( \frac {C_3}{(\tau-s)^{\frac{1}{L-1}}} \,  \, 
			\frac{1}{\vert Q_T\vert }\iint\limits_{Q_T^\tau}  u^{\delta} \, dt dx \right )^{\frac{1}{\delta}},
			$$
			which also on basis of the iteration Lemma \ref{itlm} implies
			$$
			M_{s}\leq \frac{1}{\left (\tau-s \right )^{\frac{1}{\delta (L-1)}}} \left ( \frac{C_1}{\vert Q_T\vert } \iint\limits_{Q_T^\tau} u^\delta dt dx \right)^{1/ \delta}, \quad 1/2\leq s< \tau \leq 1, $$
			with a constant $C_1$ depending only on $n, c_0, c_1, c_2, p,\alpha, r . $
			
			This completes the proof of Theorem \ref{ta2}.
		\end{proof}

		\section{Harnack's inequality}
		
		\begin{teo}\label{mth} 
			Let $u(t, x)$ be a positive weak solution of the equation \eqref{PE}. Let the conditions  \eqref{e4} be fulfilled all over the cylinders \eqref{cyl} and equation \eqref{PE}.  Then there exists a positive constant $C$ depending only $n, c_0, c_1, c_2, p, r $ such that the inequality
			\begin{equation}\label{Harn}
				\frac{\underset{Q_{T, \frac{1}{4}}} {\mbox{ess}\, \sup } \, \, u }{\underset{Q_T^{\frac{1}{4}}} {\mbox{ess}\, \inf } \, \, u } \, \leq C
			\end{equation}
			holds. Here the small lower cylinder $Q_{T, \frac{1}{4}}=K_{R/4}^{x_0} \times \left  ( t_0-3T/4, \, t_0-T/2 \right ), $ and  upper cylinders $ Q_T^{\frac{1}{4}}=K_{R/4}^{x_0}\times (t_0-T/4, t_0) .$
		\end{teo}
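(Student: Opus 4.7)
The plan is to execute the Moser-Bombieri scheme as adapted to the weighted nonlinear setting of \cite{FM}. The proof splits into three pieces: (i) a sub-solution $L^\delta\to L^\infty$ reverse-H\"older bound on the lower cylinder, supplied directly by Theorem \ref{ta2}; (ii) the analogous super-solution bound on the upper cylinder, obtained by running the same iteration on $u^{-1}$; and (iii) a weighted parabolic John-Nirenberg estimate for $\log u$. Lemma \ref{Bombieri}, applied once to $u$ and once to $u^{-1}$, will then amalgamate these into the Harnack bound.

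First I would derive the super-solution analogue of Theorem \ref{ta2}. For a positive super-solution $u$ on $Q_T$ and any $\delta\in(0,1)$, testing the super-solution inequality in \eqref{dfn} against $v=\xi^p (u+\varepsilon)^{-\beta}$ for a suitable exponent $\beta$ and letting $\varepsilon\to 0$ yields, after an iteration that mirrors the proof of Theorem \ref{ta2} term by term,
\begin{equation*}
\Big(\underset{\widetilde{Q}^{s}_T}{\mathrm{ess\,inf}}\, u\Big)^{-1}\leq \frac{1}{(\tau-s)^{\frac{1}{\delta(L-1)}}}\left(\frac{C}{|Q_T|}\iint_{\widetilde{Q}^{\tau}_T} u^{-\delta}\, dtdx\right)^{1/\delta},
\end{equation*}
where $\widetilde{Q}^{s}_T\subset\widetilde{Q}^{\tau}_T$ is a one-parameter family shrinking onto $Q_T^{\frac14}$. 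The nonlinear time term produces a sign-favourable contribution when integrated against the decreasing function $u^{-\beta}$ and is absorbed into the boundary trace at $t=t_0$; the remaining spatial estimates duplicate those of Theorem \ref{ta2}. Theorem \ref{ta2} itself, applied to a time-shifted family shrinking onto $Q_{T,\frac14}$, provides the corresponding sub-solution bound for $u$ on the lower cylinder. These two Moser inequalities furnish hypothesis (B1) of Lemma \ref{Bombieri} for $u$ on the lower cylinder and for $u^{-1}$ on the upper cylinder.

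Next I would establish the logarithmic (B2) estimate, which is the heart of the argument. Inserting $v=\xi^p u^{1-p}$ into \eqref{dfn} and exploiting the identity
\begin{equation*}
u^{p-1}\partial_t(\xi^p u^{1-p})=(1-p)\xi^p\partial_t\log u+p\xi^{p-1}\partial_t\xi ,
\end{equation*}
together with the growth hypotheses \eqref{grw1}-\eqref{grw2} and Young's inequality, yields a Caccioppoli estimate for $\log u$ of the shape
\begin{equation*}
\int_{K_R^{x_0}}\xi^p\log u\,dx\Big|_{t_0-T}^{t_0}+c\iint_{Q_T}\omega\,\xi^p|\nabla_x\log u|^p\,dtdx\leq C\iint_{Q_T}\omega\bigl(|\nabla_x\xi|^p+|\partial_t\xi|\bigr)\,dtdx.
\end{equation*}
The right-hand side is controlled via \eqref{e4} and the intrinsic choice \eqref{Tdef}; a parabolic John-Nirenberg argument in the spirit of Fabes-Garofalo \cite{FKS1}, combined with the weighted Poincar\'e inequality underpinning Corollary \ref{cc40}, then produces a constant $c^{\ast}=c^{\ast}(t_0,x_0,R)$ such that for every $k>0$,
\begin{equation*}
\big|\{(t,x)\in Q_{T,\frac14}:\log u-\log c^{\ast}>k\}\big|+\big|\{(t,x)\in Q_T^{\frac14}:\log c^{\ast}-\log u>k\}\big|\leq \frac{C}{k}|Q_T|.
\end{equation*}
This is exactly hypothesis (B2) for $u/c^{\ast}$ on the lower cylinder and for $c^{\ast}/u$ on the upper cylinder.

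Finally, applying Lemma \ref{Bombieri} to $u/c^{\ast}$ on the family contracting onto $Q_{T,\frac14}$ yields $\mathrm{ess\,sup}_{Q_{T,\frac14}} u\leq K c^{\ast}$, while applying it to $c^{\ast}/u$ on the family contracting onto $Q_T^{\frac14}$ yields $\mathrm{ess\,inf}_{Q_T^{\frac14}} u\geq c^{\ast}/K$; dividing the two bounds gives \eqref{Harn}. The principal obstacle is the logarithmic estimate above: the nonlinear term $\partial_t|u|^{p-2}u$ does not linearize cleanly under the substitution $u\mapsto\log u$ as it does in the classical linear case, and forcing a \emph{common} centering $c^{\ast}$ across two temporally disjoint cylinders requires exploiting the intrinsic geometry \eqref{Tdef}. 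This is precisely the step that consumes the Muckenhoupt exponents $\alpha>(n+p)/p$ and $r>n(p-1)/p$.
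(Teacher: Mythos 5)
Your high-level architecture — a Moser iteration for a sub-solution bound on the lower cylinder, one for $u^{-1}$ on the upper cylinder, a log-Caccioppoli plus John--Nirenberg step, and a double application of Lemma~\ref{Bombieri} — matches the paper's skeleton. But the proposal skips over the single hardest technical point in the actual proof: getting Moser's estimate to land on the lower cylinder $Q_{T,\frac14}=K_{R/4}^{x_0}\times(t_0-3T/4,\,t_0-T/2)$. Theorem~\ref{ta2}, as proved, delivers a bound on cylinders of the form $K_{sR}^{x_0}\times(t_0-sT,\,t_0)$, all of which share the top time $t_0$; it does not automatically transport to ``a time-shifted family shrinking onto $Q_{T,\frac14}$,'' because the intrinsic height $T$ in \eqref{Tdef} depends on the location of the cylinder through $\omega$. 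The paper instead locates the maximizer $(t',x')$ of $u$ in $Q_{T,\frac14}$, introduces a \emph{new} intrinsic height $T_1$ via \eqref{tbir} (an equation of the same type as \eqref{Tdef} but anchored at $(t',x_0)$, with an adjustable constant $C_1$), re-runs Moser and the log-estimate on $\tilde Q_{T_1}^s=K_{sR}^{x_0}\times(t'-sT_1,t')$, and then proves $T_1<T/4$ by contradiction using the doubling property that follows from \eqref{e4}. Without this ingredient your plan to apply Lemma~\ref{Bombieri} ``on the family contracting onto $Q_{T,\frac14}$'' has no supply of nested intrinsic cylinders to feed hypothesis (B1), and the estimate does not close.

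Two secondary points. First, you propose re-running the Moser iteration for super-solutions using the test function $\xi^p(u+\varepsilon)^{-\beta}$; the paper avoids this entirely by observing that $1/u$ is itself a positive sub-solution of \eqref{PE}, so Theorem~\ref{ta2} applies verbatim to $1/u$. Your route is not wrong, but it duplicates work. Second, the unified centering constant $c^{\ast}$ produced by a single John--Nirenberg argument across both (temporally disjoint) cylinders is not what the paper does: it normalizes by the quantity $l$ from \eqref{e5.1} (essentially a median of $u$ on $Q_{T,\frac14}$), sets $l=1$ by scaling, and then runs two \emph{separate} measure estimates, \eqref{e5.3} for $\ln^+\frac1u$ over $Q_T^{\frac14}$ and \eqref{ee5.3} for $\ln^+u$ over $\tilde Q_{T_1}^{\frac14}$, each with its own cylinder family and its own intrinsic time. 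Your log-Caccioppoli identity is algebraically correct, but you must truncate (work with $\ln^+\frac1u$ and $\ln^+u$ separately, using $\left(\frac{1}{u^{p-1}}-1\right)_+\eta^p$ rather than $\xi^p u^{1-p}$ for the first) and then control the boundary trace with Lemma~\ref{Mamedov}; none of this produces a common $c^\ast$ shared across two cylinders whose intrinsic times differ.
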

		\begin {proof} We aim to establish Harnack's inequality for \eqref{PE} based on Lemma \ref{Bombieri}. Let $ l>0 $ be such that the surface $u(y, x)=l$ divides $Q_T^{\frac{1}{4}}$ into $n+1$ equal parts in terms of Lebesgue measure, meaning:
		\begin{equation}\label{e5.1}
			l=\sup \left \{ k\in \R: \quad \left \vert \left \{ u< k \right \} \cap Q_{T, \frac{1}{4}}  \right \vert \leq \frac{1}{2}\left \vert Q_{T, \frac{1}{4}} \right  \vert \right \} \cdot  
		\end{equation}
		By scaling the solution $u(t, x)$ by a constant, we may assume that $l=1$.
		
		Let $ v=\ln^+ \frac{1}{u} $ and $ D^+=Q_T \cap \{ v>0 \}$. 
		Define $1/2\le s<\tau \le 1$. Let $ 0 \leq \eta \leq 1$ be a Lipschitz continuous function that is unity on $Q_T^s=K_{sR}^{x_0}\times (t_0-sT, t_0)$ and zero on $\Gamma (Q_T^\tau)$. We assume $\eta$ satisfies $\vert \nabla_{x} \eta \vert \leq \frac {c_n}{(\tau-s)R}$ and $\vert \partial_t \eta \vert \leq \frac {c_n}{(\tau-s)T}$. It is evident that such a function $\eta$ exists.

	Choose a test function $\left (\frac{1}{u^{p-1}} -1 \right )_+\eta^p $ in \eqref{dfn}:
		\begin{equation}\label{fei}
			\begin{split}
				-&\int_{K_{\tau R}^{x_0}} u^{p-1} \cdot \left (\frac{1}{u^{p-1}}-1 \right)_+ \eta^p \,  \Big \vert _{t=t_1}^{t=t_2} \, dx\\
				-&\iint_{Q_T^\tau} \eta^p A(t,x,u,\nabla_x u) \nabla_x  \left ( \frac{1}{u^{p-1}}-1\right )_+  \, dt dx \\
				+&\iint_{Q_T^\tau}\eta^p  u^{p-1} \cdot \, \partial _t \left (\frac{1}{u^{p-1}}-1 \right )_+  \, dt dx\\
				+&\iint_{Q_T^\tau} u^{p-1} \cdot p\eta^{p-1}  \left (\frac{1}{u^{p-1}}-1 \right )_+ \,  \partial_t \eta \\
				=&\iint_{Q_T^\tau}   \, p\eta^{p-1} \, A(t,x,u,\nabla_x u)  \left ( \frac{1}{u^{p-1}}-1\right )_+ \,  \nabla_x  \eta  \, dt dx;  \quad \quad  t_1=t_0-T, \, t_2=t_0 \cdot  
			\end{split}
		\end{equation}
	By using the integration by part, we get
		\begin{equation*}
		\begin{split}
		&\int_{K_R^{x_0}} \eta^p \left ((p-1) \ln^+ \frac{1}{u}-(1-u^{p-1})_+ \right )\Big \vert _{t=t_2}\, dx\\
		  +&(p-1)\iint_{Q_T^\tau} \eta^p  u^{-p} A(t,x,u,\nabla_x u) \, \nabla_x u   \, \cdot \chi _{0<u<1}   \, dt dx\\ 
		-&\iint_{Q_T} p \eta^{p-1} \left ((p-1) \ln^+ \frac{1}{u}-(1-u^{p-1})_+ \right ) \partial_t \eta   \,dt dx \\
		=&\iint_{Q_T}   \, p\eta^{p-1} \, A(t,x,u,\nabla_x u)  \left ( 1-u^{p-1}\right )_+ \, \frac{ \nabla_x \eta}{u^{p-1}}  \, dt dx.
			\end{split}
	\end{equation*}
		\bigskip
		
		Therefore, 
		$$
		(p-1)\iint_{Q_T^\tau\cap D^+} A(t,x,u,\nabla_x u)\frac{\nabla_x u}{u^{p}}\eta^p \, dt dx
		$$
		$$
		 + \int_{K_R^{x_0}} \eta^p \left ( (p-1)\ln^+ \frac{1}{u}-(1-u^{p-1})_+ \right )\Big \vert _{t=t_2}\, dx
		$$
		$$
		\leq p \iint_{Q_T^{\tau}}\left ( 1-u^{p-1} \right )_+ \,  \eta^{p-1} A(t,x,u,\nabla_x u)\frac{\nabla_x \eta}{u^{p-1}}\, dt dx 
		$$
		$$
		+\iint_{ Q_T^\tau}   p\left [(p-1) \ln^+ \frac{1}{u}-(1-u^{p-1})_+ \right ]\eta^{p-1} \vert \partial_t \eta \vert \, dt dx .
		$$
		Base on the growth  conditions \eqref{grw1} and \eqref{grw2}, it follows
		$$
		(p-1)c_{1}\iint_{Q_T^\tau\cap D^+} \omega(t,x) \left\vert\frac{\nabla_x u}{u}\right\vert^{p}\eta^p \, dt dx
		$$
		$$
		+ \int_{K_R^{x_0}} \eta^p \left ( (p-1)\ln^+ \frac{1}{u}-(1-u^{p-1})_+ \right )\Big \vert _{t=t_2}\, dx
		$$
		$$
		\leq c_{2}p \iint_{Q_T^{\tau}}\omega(t,x)\left ( 1-u^{p-1} \right )_+ \,  \eta^{p-1} \left\vert\frac{\nabla_x u}{u}\right\vert^{p-1}\nabla_x \eta\, dt dx 
		$$
		$$
		+\iint_{ Q_T^\tau}   p\left [(p-1) \ln^+ \frac{1}{u}-(1-u^{p-1})_+ \right ]\eta^{p-1} \vert \partial_t \eta \vert \, dt dx .
		$$
		Thus 
		$$
		c_1(\varepsilon) \iint_{Q_T^\tau} \left \vert \nabla_{x}  \ln^+ \frac{1}{u} \right \vert ^p \omega \eta^p \, dt dx 
		+\int_{K_{\tau R}^{x_0}}\left ( (p-1)\ln^+ \frac{1}{u}-(1-u^{p-1})_+ \right )\Big \vert _{t=t_2}\, dx
		$$
		$$
		\leq c_2(\varepsilon) \iint _{Q_T^\tau}\omega \left (1-u^{p-1} \right )_+^p  \vert \nabla_{x} \eta \vert ^p \, dt dx
		$$
		$$
		+\iint_{ Q_T^\tau}   p\left [(p-1) \ln^+ \frac{1}{u}-(1-u^{p-1})_+ \right ]\eta^{p-1} \vert \partial_t \eta \vert \, dt dx ,  
		$$
		therefore and since $ (p-1) \ln^+ \frac{1}{u}-(1-u^{p-1})_+ \geq 0 $, 
		\begin{align}\label{e5.2}
				 \iint_{Q_T^\tau}  \eta^p  \omega \left \vert \nabla_{x}\ln^+\frac{1}{u}  \right \vert^p  dt dx +&\int_{K_{\tau R}^{x_0}}\left ( (p-1)\ln^+ \frac{1}{u}-(1-u^{p-1})_+ \right )\Big \vert _{t=t_2}\, dx \nonumber\\
				&\leq  \frac{c_2(\varepsilon)}{c_1(\varepsilon)} \cdot \frac{c_n}{( \tau-s)^p} \frac{\omega(Q_T)}{R^p}\nonumber\\
    +& \frac{p(p-1)}{c_1(\varepsilon) T (\tau-s)} \iint_{Q_T^\tau} \eta^{p-1} \ln^+ \frac{1}{u}  \, dt dx \, \cdot 
		\end{align}

		Thus for $ \quad 1/2\leq s<\tau\leq 1 $ we have the inequality
		\begin{align}\label{4.3}
			\iint_{Q_T^s} \omega \left \vert\nabla_{x} v \right \vert^p  dt dx
			\leq & \frac{c_2(\varepsilon)}{c_1(\varepsilon)} \cdot \frac{c}{( \tau-s)^p} \frac{\omega(Q_T)}{R^p}\nonumber\\
   +& \frac{p(p-1)}{c_1(\varepsilon) T (\tau-s)} \iint_{Q_T^\tau} v dt dx \, \cdot 
		\end{align}

			Lemma \ref{Mamedov}, combined with Holder's inequality, leads to
			\begin{align}\label{F.M}
				\int_{D^+(s)} v \, dt dx \leq R \iint_{D^+(s)} \left \vert \nabla_{x} v \right \vert \, dt dx\nonumber\\
    +T\int\limits_{K_R^{x_0}\cap D^+(s)} v \Big \vert _{t=t_2} \, dx 
    \le T\int\limits_{K_R^{x_0}\cap D^+(s)} v \Big \vert _{t=t_2} \, dx\nonumber\\
    +\sigma^{\prime}(Q_T)^{1/p^{\prime}} \, R \, \left ( \iint_{D^+(s)}\omega \left \vert \nabla_{x} v \right \vert^p \, dt dx  \right)^{1/p} \cdot
			\end{align}
			Utilizing \eqref{lgr}, \eqref{e5.2}, and \eqref{4.3}, along with the expression of $v=\ln^+ \frac{1}{u}$ and the Young inequality, we deduce that 
			$$
			\iint_{D^+(s)} v  \, dt dx \le \frac{c \, R\,  \sigma(Q_T)^{1/p^{\prime}}  }{(\tau-s)} \cdot \left ( \left ( \frac{c_2}{c_1} \frac{\omega(Q_T)}{R^p} \right )^{1/p}+ \frac{p(p-1)}{c_1 T^{1/p} } \left ( \iint_{D^+(\tau)} v \, dt dx \right )^{1/p}\right ).
			$$
			Utilizing the assumption \eqref {e4} and noting that it implies $\omega \in A_p$ throughout the cylinders \eqref{cyl}, we can employ Young's inequality. This allows us to derive:
			$$
			\iint_{D^+(s)} v  \, dt dx \le \frac{1}{p} \iint_{D^+(\tau)} v\, dt dx +\frac{c \vert Q_T \vert }{(\tau-s)^{p^{\prime}}},
			$$
			which consequently implies 
			\begin{equation}\label{lgr}
				\iint_{D^+(s)} v  \, dt dx \le  \frac{c_3 \vert Q_T \vert }{(\tau-s)^{p^{\prime}}}, \quad 1/2\le s<\tau \le1   
			\end{equation}
			based on Lemma \ref{itlm}. 
			
			Here, we utilized the assumption \eqref{e4} which implies
   $$\omega(Q_T) \sigma^{p-1}(Q_T)\le c \left \vert Q_T \right \vert^p,$$
   
which is equivalent to the condition $\frac{R^{p^{\prime}} \sigma(Q_T)}{T^{\frac{p^{\prime}}{p}}}\le c \vert Q_T \vert$ based on the definition of $T$ from \eqref{Tdef}. Applying \eqref{lgr} for $s=3/4$ and $\tau=1$, we deduce the existence of a constant $C$ depending solely on $c_1, c_2, c_0$, and $n$, such that for any $k>0$, 
			\begin{equation}\label{e5.3}
				\left \vert \left \{  (t, x)\in Q_{T}^{\frac{1}{4}}: \,  \ln \frac{1}{u}  > k  \right \}\right \vert \leq \frac{C}{k} \, \vert Q_T\vert \cdot 
			\end{equation}
			
			When setting $C=1$ in the definition of $T$ in \eqref{Tdef}, and considering that $1/u$ serves as a sub-solution, we can simultaneously apply Moser's estimate \eqref{d1} and the logarithmic estimate \eqref{e5.3}. Consequently, Lemma \ref{Bombieri} asserts the existence of a constant $C$ that relies solely on $c_0, c_1, c_2$, and $n$, ensuring:
			\begin{equation}\label{d5.4}
				\frac{1}{\underset{Q_T^{\frac{1}{4}}} {\mbox{ess}\, \inf } \, u} \, \,  \leq C
			\end{equation}

			Show that, 
			$$
			\underset {Q_{T, \frac{1}{4}} }{\mbox{ess}\, \sup } \, \,  u \leq C,
			$$
			where  
			\begin{equation}\label{ff2}
				Q_{T, \frac{1}{4}}=K_{R/4}^{x_0} \times \left  ( t_0-3T/4, \, t_0-T/2 \right ).
			\end{equation}
		Suppose the maximum value of $u$ in $Q_{T, \frac{1}{4}}$ is achieved at $(t^\prime, x^\prime)$, such that:
		\begin{equation}\label{tpm}
			\sup\limits_{Q_{T, \frac{1}{4}}}\, u=u(t^\prime, x^\prime) \cdot
		\end{equation}  
		Define $T_1$ as follows:
		\begin{equation}\label{tbir}
			\left (\iint\limits _{Q_{T_1}} \omega^{\alpha} \, dtdx\right )^{1/\alpha} =  \frac{C_1 R^{n/{\alpha}+p}}{T_1^{1/{\alpha}^\prime}}, 
		\end{equation}
		where $Q_{T_1}=K_R^{x_0}\times (t^\prime-T_1, t^\prime)$, representing the cylinder \eqref{Tdef} with the upper point $(t^\prime, x_0)$. The value of $C_1$ will be determined later. 
			
			In this context, if $T_1<T/4$, we are within the framework of Moser's estimate \eqref{d1} for the cylinders $\tilde Q_{T_1}^s=K_{sR}^{x_0}\times (t^\prime-sT_1, t^\prime) $ and $\tilde Q_{T_1}^\tau= K_{\tau R}^{x_0}\times (t^\prime-\tau T_1, t^\prime) $, where $1/4\leq s<\tau \leq 1/2$: 
			
			\begin{equation}
				\underset{\tilde Q_{T_1}^s}{\mathrm{ess} \sup }\, u \, \leq \, \frac{C}{(\tau-s)^{\frac{1}{\delta (L-1)}}} \left ( \frac{1}{\vert Q_{T_1}\vert }\iint\limits_{\tilde Q_{T_1}^\tau} u^\delta \, dt dx \right )^{1/\delta}, \quad 1/8\leq s<\tau \leq 1/4 \cdot 
			\end{equation}
		Define $ w=\ln^+ u $. 
		Let $D^-=Q_{T_1}\cap\{ w>0 \}$. 
		Consider a Lipschitz continuous function $\eta$ such that $0 \leq \eta \leq 1$. In this case, $\eta$ is unity on the region $$\tilde{Q}_{T_1}^s=K_{s R}^{x_0}\times \left (t^\prime-sT_1, \, t^\prime \right )$$ and becomes zero outside $$K_{\tau R}^{x_0}\times \left ( t^\prime-\tau T_1, \, t_0-T/4 \right ) . $$
			Suppose $\eta$ satisfies $\vert \nabla_{x} \eta \vert \leq \frac {c}{(\tau-s)R}$ and $\vert \partial_t \eta \vert \leq \frac {c}{(\tau-s)T_1}$, where $1/4\leq s<\tau \leq 1/2$.
			
			Choose a test function $ \frac{1}{u^{p-1}}\eta^p $ in \eqref{dfn}, and let $ t_1=t^\prime-\tau T_1 $ and $ t_2=t_0-T/4 $:
			\begin{align}
			-\int_{K_{\tau R}^{x_0}} u^{p-1} \cdot \frac{1}{u^{p-1}} \eta^p \,  \Big \vert _{t=t_1}^{t=t_2} dx +&(p-1)\iint_{\tilde Q_{T_1}^\tau}A(t,x,u,\nabla_{x}u)\frac{\nabla_{x}u}{u^{p}} \eta^p \, dt dx\nonumber\\
			-&(p-1)\iint_{\tilde Q_{T_1}^\tau}u^{p-1}\frac{\partial_t u}{u^{p}}\eta^{p}\, dt dx\nonumber\\
   +& p\iint_{\tilde Q_{T_1}^\tau} u^{p-1}\cdot \frac{1}{u^{p-1}} \, \eta^{p-1}  \partial_t \eta \, dtdx\\
			 -&p\iint_{ \tilde Q_{T_1}^\tau}  \, A(t,x,u,\nabla_{x}u)\frac{1}{u^{p-1}} \,   \eta^{p-1} \nabla_{x} \eta  \, dt dx =0.
			\end{align}
			Then, we have
			$$
		(p-1)c_{1}\iint_{\tilde Q_{T_1}^\tau}\omega(t,x)\left\vert\frac{\nabla_x u}{u}\right\vert^{p} \eta^p \, dt dx +\iint_{\tilde Q_{T_1}^\tau} p ( \ln u +1) \, \eta^{p-1} \partial_t \eta\, dt dx 
			$$
			$$
			\leq  pc_{2}\iint_{\tilde Q_{T_1}^\tau }  \omega(t,x)\left\vert\frac{\nabla_x u}{u}\right\vert^{p-1}\eta^{p-1}\nabla_x \eta\, dt dx . 
			$$
		This, in conjunction with Young's inequality, leads to
			$$
			\iint_{\tilde Q_{T_1}^\tau } \omega \vert \nabla_{x} \ln u\vert^p  \eta^p dt dx  \leq
			\frac{ c_2}{c_1}\iint_{ \tilde Q_{T_1}^\tau }   \, \omega \vert  \nabla_{x} \eta \vert ^p \, dt dx  
			$$
			$$
			+\frac{p}{c_1} \iint_{\tilde Q_{T_1}^\tau} p \eta^{p-1} \left ( \vert  \ln u \vert +1 \right ) \,  \vert \eta_t \vert  dt dx \cdot
			$$
			Similar to the previous analysis, for $ \frac{1}{4}\leq s<\tau \leq \frac{1}{2}$ and the function $v=\vert \ln u \vert $, we can derive the estimate
			\begin{align}\label{ee4.9}
				\iint_{\tilde Q_{T_1}^s} \omega \left \vert \nabla_{x} v \right \vert ^p  \, dt dx \leq &\frac{pc_n}{T_1(\tau-s)c_1} \iint_{\tilde Q_{T_1}^\tau} v  \, dt dx\nonumber\\ 
				+&\frac{c_2}{c_1} \frac{\omega(Q_{T_1}) }{ R^p \, (\tau-s)^p}+\frac{pc_{n}}{T_{1}(\tau-s)c_{1}}\vert Q_{T_1}\vert.
			\end{align}
		On the other hand, employing \eqref{e4}, we deduce that $$\frac{\vert Q_{T_1}\vert}{T_1} \asymp R^n = \frac{R^{n+p}}{R^p}\asymp\frac{\omega (Q_{T_1})}{R^p}, $$ hence, combined with \eqref{ee4.9}, we obtain
			\begin{equation}\label{eee4.9}
				\iint_{\tilde Q_{T_1}^s} \omega \left \vert \nabla_{x} w \right \vert ^p  \, dt dx \leq \frac{pc_n}{T_1(\tau-s)c_1} \iint_{\tilde Q_{T_1}^\tau} v  \, dt dx 
				+ C \frac{\omega(Q_{T_1}) }{ R^p \, (\tau-s)^p} .
			\end{equation}
		Using similar reasoning as in \eqref{F.M} and \eqref{e5.1}, we derive the following inequality 
			\begin{align}\label{e4.8}
				\iint\limits _{ \tilde Q_{T_1}^s} v \, dt dx \leq& R \iint\limits _{ \tilde Q_{T_1}^s} \left \vert \nabla_{x} v \right \vert \, dt dx\nonumber\\
    \le& \sigma(Q_{T_1})^{1/p^{\prime}} \, R \, \left ( \iint\limits _{\tilde Q_{T_1}^s}\omega \left \vert \nabla_{x} v \right \vert^p \, dt dx .  \right)^{1/p} .
			\end{align}
			Thus, utilizing \eqref{ee4.9}, \eqref{e4}, and the Young inequality, we obtain
			$$
			\iint\limits_{\tilde Q_{T_1}^s} v  \, dt dx \le \frac{c \, R \sigma(Q_{T_1})^{1/p^{\prime}}  }{(\tau-s)} \left (  \left (  \frac{c_2}{c_1} \frac{\omega(Q_{T_1})}{R^p} \right )^{1/p}+ \frac{p}{c_1 T_1^{1/p} } \left ( \iint\limits_{\tilde Q_{T_1}^\tau} v \, dt dx \right )^{1/p}\right ),
			$$
			Combining this observation with Lemma \ref{itlm}, we can infer
			$$
			\iint\limits _{\tilde Q_{T_1}^s} v  \, dt dx \le \frac{1}{p} \iint\limits _{\tilde Q_{T_1}^\tau } v \, dt dx +\frac{c \vert Q_{T_1} \vert }{(\tau-s)^{p^{\prime}}} \, ,
			$$
			or
			$$
			\iint\limits_{ \tilde Q_{T_1}^s} v  \, dt dx \le \frac{C}{(\tau-s)^{p^{\prime}}} \vert Q_{T_1} \vert  \cdot
			$$   
			This inequality straightforwardly implies the existence of a positive constant $C$ determined solely by $c_1, c_2, c_0$, and $n$. For any $k>0$, it is satisfied that
			\begin{equation}\label{ee5.3}
				\left \vert \left \{   (t, x)\in  \tilde Q_{T_1}^{\frac{1}{4} }:  \, \ln u > k \right \} \right \vert  <  \frac{C}{k} \, \vert Q_{T_1} \vert \cdot 
			\end{equation}
		Using Moser's inequality \eqref{d1} specifically for the cylinders, 
			$$
			\tilde{Q}_{T_1}^s=K_{sR}^{x_0}\times \left ( t^\prime-s T_1, \,  t^\prime \right ) $$ 
			$$
			\tilde{Q}_{T_1}^\tau=K_{\tau R}^{x_0}\times \left ( t^\prime-\tau T_1, \,  t^\prime \right ) 
			$$
			we can deduce
			\begin{equation}\label{di}
				\underset{\tilde Q_{T_1}^s}{\mathrm{ess} \sup }\, u \, \leq \, \frac{C}{(\tau-s)^{\frac{1}{\delta (L-1)}}} \left ( \frac{1}{\vert Q_{T_1}\vert }\iint\limits_{\tilde Q_{T_1}^\tau} u^\delta \, dt dx \right )^{1/\delta}, \quad 1/8\leq s<\tau \leq 1/4 \cdot 
			\end{equation}

		Following the establishment of estimates \eqref{ee5.3} and \eqref{di}, we deduce below based on Lemma \ref{Bombieri}: there exists a constant $C$ that relies solely on $c_0$, $c_1$, $c_2$, and $n$, such that
		 \[\underset{\tilde {Q}_{T_1}^{\frac{1}{4}}} {\mbox{ess}\, \sup } \, \,  u \leq C.\]
		Let's remember that $ \sup\limits_{Q_{T, \frac{1}{4}}} \, u =u(t^\prime, x^\prime)\leq \sup\limits_{\tilde Q_{T_1}^{\frac{1}{4}}} \, u  $, where $Q_{T, \frac{1}{4}}$ is defined as in \eqref{ff2}. Therefore,
			\begin{equation}\label{f1}
				\sup\limits_{Q_{T, \frac{1}{4}}} \, u \leq C \,  \cdot
			\end{equation}
		Deriving from \eqref{d5.4} and \eqref{f1}, we obtain
			\begin{equation}\label{e5.4}
				\frac{\underset{Q_{T, \frac{1}{4}}} {\mbox{ess}\, \sup } \, \, u }{\underset{Q_T^{\frac{1}{4}}} {\mbox{ess}\, \inf } \, \, u } \, \leq C
			\end{equation}
			
		The proof of Theorem \ref{mth} is finalized once we establish $T_1<T/4$, as this ensures that the maximum point $(t^\prime, x^\prime)$ falls within the cylinder $\tilde Q_{T_1}^{\frac{1}{4}}$.
		
		In the case where $T_1\geq T/4$, we leverage the definition of $T_1$ from equation \eqref{tbir}, along with the doubling property of the functions satisfying \eqref{e4}. This can be accomplished using conventional methods.
			$$
			\iint\limits_{Q_{T_1}} \omega^p \, dtdx  \geq C_2 \left ( \frac{\vert Q_{T_1}\vert }{\vert Q_T\vert }\right )^{\delta_1} \iint\limits_{Q_{T}} \omega^p \, dtdx ,
			$$
			we get  
			$$
			C_2 \left ( \frac{T_1}{T}\right )^{p-1+\delta_1}  \leq C_1, 
			$$
			therefore,  
			\begin{equation}\label{dfT1}
				\left (1/4\right )^{p-1+\delta_1} \leq  C_1 / C_2 \cdot
			\end{equation}
			
		By selecting an adequately small constant $C_1$ from \eqref{tbir}, we arrive at the conclusion that $C_1$ is less than $C_2 \left ( 1/4 \right )^{1/p^\prime+\delta_1/p}$. This discrepancy contradicts the assumption. Consequently, it demonstrates that $T_1$ is not greater than $T/4$.
			
			This completes proof of the Harnack inequality.
		\end{proof}

	\end{document}